\newtheorem{theorem}{Theorem}[section]
\newtheorem{lemma}[theorem]{Lemma}
\newtheorem{corollary}[theorem]{Corollary}
\theoremstyle{definition}
\newtheorem{definition}[theorem]{Definition}
\theoremstyle{remark}
\newtheorem{remark}[theorem]{Remark}
\newtheorem{questions}[theorem]{Questions}
\numberwithin{equation}{section}
\newcommand{\CC}{\mathbb C}
\newcommand{\HH}{\mathbb H}
\newcommand{\NN}{\mathbb N}
\newcommand{\cH}{\mathcal H} 
\newcommand{\cP}{\mathcal P}
\newcommand{\cD}{\mathcal D}
\newcommand{\QQ}{\mathbb Q}
\newcommand{\RR}{\mathbb R}
\newcommand{\ZZ}{\mathbb Z}
\newcommand{\PP}{\mathbb P}
\newcommand{\SL}{\mathop{\mathrm {SL}}\nolimits}
\newcommand{\Mp}{\mathop{\mathrm {Mp}}\nolimits}
\newcommand{\rank}{\mathop{\mathrm {rank}}\nolimits}
\def\Orth{\operatorname{O}}
\def\Borch{\operatorname{Borch}}
\def\div{\operatorname{div}}
\begin{document}

\title{Reflective modular forms: A Jacobi forms approach}

\author{Haowu Wang}

\address{Laboratoire Paul Painlev\'{e}, Universit\'{e} de Lille, 59655 Villeneuve d'Ascq Cedex, France}

\email{haowu.wang@univ-lille.fr}

\subjclass[2010]{Primary 11F50, 11F55; Secondary 17B67, 14J28}

\date{\today}

\keywords{Jacobi forms, reflective modular forms, Borcherds products}

\begin{abstract}
We give an explicit formula to express the weight of $2$-reflective modular forms. We prove that there is no $2$-reflective lattice of signature $(2,n)$ when $n\geq 15$ and $n\neq 19$ except the even unimodular lattices of signature $(2,18)$ and $(2,26)$. As applications, we give a simple proof of Looijenga's theorem that the lattice $2U\oplus 2E_8(-1)\oplus\langle -2n\rangle$ is not $2$-reflective if $n>1$. We also classify reflective modular forms on lattices of large rank and the modular forms with the simplest reflective divisors.
\end{abstract}

\maketitle

\section{Introduction}
Let $M$ be an even lattice of signature $(2,n)$. A non-constant holomorphic modular form for $M$ is called reflective if the support of its divisor is  contained in the union of quadratic divisors determined by reflective vectors of $M$. This type of modular forms first appeared in the works of Borcherds \cite{Bo1,Bo2} and Gritsenko-Nikulin \cite{GN}. They have many applications in various related topics, such as the classification of Lorentzian Kac-Moody algebras \cite{Bar03, GN1, GN1v1, GN2, GN3,S1,S2}, search of hyperbolic reflection groups \cite{Bar03, Bo3} and the theory of moduli spaces \cite{GH1,GHS,L,Ma2}.

Reflective modular forms seem to be exceptional and very rare. The classification of reflective modular forms has been widely studied by several mathematicians. In 1998, Gritsenko and Nikulin first conjectured that the number of lattices having reflective modular forms is finite \cite{GN1} and gave a complete classification for $n=3$ \cite{GN1v1,GN2}. Later Scheithauer gave a complete classification of reflective modular forms of singular weight on lattices of prime level \cite{S1, S2, S3, S4}. Looijenga \cite{L} proved one part of the arithmetic mirror symmetry conjecture formulated in \cite{GN1v2}, which might give a new approach to classify reflective modular forms. Recent work of Ma \cite{Ma2} showed that there are only finitely many lattices of signature $(2,n)$ which carry a strongly reflective modular form that vanishes at order one along the reflective divisors when $n\geq 4$.

The aim of this paper is to investigate 2-reflective modular forms which are the most basic class of reflective modular forms.  A non-constant holomorphic modular form for $M$ is called $2$-reflective if the support of its zero divisor is contained in the Heegner divisor defined by the $(-2)$-vectors in $M$. A lattice $M$ is called $2$-reflective if it admits a $2$-reflective modular form. In 2017, Ma \cite{Ma1} showed that there are only finitely many $2$-reflective lattices of signature $(2,n)$ with $n\geq 7$ and there is no $2$-reflective lattice when $n\geq 26$ except the even unimodular lattice ${II}_{2,26}$ of signature $(2,26)$. In this paper, we prove the following main theorem.

\begin{theorem}[Theorem \ref{th2'}]
Let $M$ be a $2$-reflective lattice of signature $(2,n)$ with $n\geq 15$. Then $n=19$ or $M$ is isomorphic to the unique even unimodular lattice of signature $(2,18)$ or $(2,26)$.
\end{theorem}

Our approach is based on the Gritsenko-Nikulin representation of Borcherds products in terms of Jacobi forms \cite{G3, GN1v1}. When $M$ contains two integral hyperbolic planes (i.e. $M=2U\oplus L(-1)$), every $2$-reflective modular form can be represented as a Borcherds product \cite{Br2}. By means of the isomorphism between vector-valued modular forms and Jacobi forms, there exists a weakly holomorphic Jacobi form of weight $0$ and index $L$ such that its Borcherds product gives the above $2$-reflective modular form. Then the identity (Lemma \ref{L2}) related to $q^0$-term of Jacobi forms of weight $0$ yields a formula expressing the weight of $2$-reflective modular forms (Theorem \ref{th1}). Furthermore, we can construct holomorphic Jacobi forms of certain small weights from the above Jacobi form of weight $0$ by using the weight raising differential operators (Lemma \ref{L1}). The existence of such Jacobi forms implies the non-existence of $2$-reflective modular forms with respect to lattices of large rank. Our main theorem gives us a necessary condition for a lattice of signature $(2,19)$ being $2$-reflective (Theorem \ref{th2}). From this, we deduce that the interesting lattices $T_n=2U\oplus 2E_8(-1)\oplus \langle -2n\rangle$  are not $2$-reflective for $n\geq 2$ (Theorem \ref{th3}). Note that this question has been investigated in \cite{GN2, N} and the above result was first proved by Looijenga in \cite{L}.

The above arguments can also be used to classify reflective modular forms. In section \ref{Sec:4}, we study reflective modular forms on lattices of prime level and show that such modular forms do not exist when the rank of lattice is large enough, this generalizes a result of Scheithauer \cite[Theorem 6.5]{S4}. More precisely, we show

\begin{theorem}[Theorem \ref{coro}]
Let $M$ be a reflective lattice of signature $(2,n)$ and of prime level $p$.
\begin{enumerate}
\item If $p=2$ and $n>18$ and $n\neq 22$, then $M$ is isomorphic to $II_{2,26}(2)$.
\item If $p=3$ and $n>14$ and $n\neq 20$, then $M$ is isomorphic to $II_{2,18}(3)$ or $II_{2,26}(3)$.
\item If $p>3$ and $n> 10+ 24/(p+1)$, then $M$ is isomorphic to $II_{2,18}(p)$ or $II_{2,26}(p)$.
\end{enumerate}
\end{theorem}

In \cite{Ma1}, Ma showed that there is no reflective lattice of signature $(2,n)$ with $n\geq 26$ containing $2U$ except the even unimodular lattice of signature $(2,26)$. As an extension of Ma's result,  we prove the following result. 

\begin{theorem}[Theorem \ref{prop01}]
There is no reflective lattice of signature $(2,n)$ with $23\leq n\leq 25$.
\end{theorem}

Our approach can be applied to some other questions. For instance, it provides us a straightforward way to classify the modular forms with the simplest reflective divisors, i.e. the dd-modular forms (see Section \ref{Sec:5}). This gives a generalization of the main results in \cite{CG1, GH} (see Theorem \ref{th4}).

The paper is organized as follows. In section \ref{Sec:2} we introduce briefly Jacobi forms and differential operators. In section \ref{Sec:3} we define 2-reflective modular forms and prove our first main theorem. An application is also presented. In section \ref{Sec:4} we classify reflective modular forms.  Section \ref{Sec:5} is devoted to the classification of dd-modular forms.

\section{Preliminaries: Jacobi forms}\label{Sec:2}
In this section, some standard facts about Jacobi forms are recalled.  We refer to \cite{CG2,G} for more details. From now on, $L$ always denotes an even positive-definite lattice with bilinear form $(\cdot, \cdot)$ and dual lattice $L^\vee$. The rank of $L$ is denoted as $\rank(L)$. We define the Jacobi forms in the following way.

\begin{definition}
Let $\varphi : \HH \times (L \otimes \CC) \rightarrow \CC$ be a holomorphic function and $k\in\ZZ$. If $\varphi$ satisfies the functional equations
\begin{align}
\varphi \left( \frac{a\tau +b}{c\tau + d},\frac{\mathfrak{z}}{c\tau + d} \right)& = (c\tau + d)^k e^{i \pi \frac{c(\mathfrak{z},\mathfrak{z})}{c \tau + d}} \varphi ( \tau, \mathfrak{z} ),\\
\varphi (\tau, \mathfrak{z}+ x \tau + y)&= e^{-i \pi ( (x,x)\tau +2(x,\mathfrak{z}) )} \varphi ( \tau, \mathfrak{z} )
\end{align}
for any $\left( \begin{array}{cc}
a & b \\ 
c & d
\end{array} \right)   \in \SL_2(\ZZ)$ and any $x,y \in L$
and $\varphi$ admits a Fourier expansion as 
\begin{equation}\label{JFFC}
\varphi ( \tau, \mathfrak{z} )= \sum_{n\geq n_0 }\sum_{l\in L^\vee}f(n,l)q^n\zeta^l
\end{equation}
where $n_0\in \ZZ$, $q=e^{2\pi i \tau}$ and $\zeta^l=e^{2\pi i (l,\mathfrak{z})}$, then $\varphi$ is called a weakly holomorphic Jacobi form of weight $k$ and index $L$. If $\varphi$ further satisfies the condition
$$ f(n,l) \neq 0 \Longrightarrow n \geq 0 $$
then $\varphi$ is called a weak Jacobi form.  If $\varphi$ further satisfies the stronger condition
$$ f(n,l) \neq 0 \Longrightarrow 2n - (l,l) \geq 0 $$
then $\varphi$ is called a holomorphic Jacobi form.
We denote by $J^{w.h.}_{k,L}$ (resp. $J^{w}_{k,L}$, $J_{k,L}$) the vector space of weakly holomorphic Jacobi forms (resp. weak Jacobi forms, holomorphic Jacobi forms) of weight $k$ and index $L$.
\end{definition}

Similarly, we can also define Jacobi forms with character. For more details, we refer to \cite{CG2, G, GN1v1}.

The Fourier coefficient $f(n,l)$ depends only on the number $2n-(l,l)$ and the class of $l$ modulo $L$. The fact implies that for fixed $n$ the sum $\sum_{l\in L^\vee}f(n,l)q^n\zeta^l$ in (\ref{JFFC}) is finite.
The number $2n-(l,l)$ is called the hyperbolic norm of Fourier coefficient $f(n,l)$. The Fourier coefficients $f(n,l)$ with negative hyperbolic norm are called singular Fourier coefficients, which play a crucial role in the theory of Borcherds products. By definition, a weakly holomorphic Jacobi form without singular Fourier coefficient is a holomorphic Jacobi form. 

We next explain the isomorphism between vector-valued modular forms and Jacobi forms. More details can be found in  \cite[Section 2]{CG2} and \cite[Lemma 2.3]{G}. Let $U$ be a hyperbolic plane, i.e. $U=\ZZ e\oplus\ZZ f$ with $(e,f)=1$, $(e,e)=(f,f)=0$. Then the lattice $2U\oplus L(-1)$ is an even lattice of signature $(2,\rank(L)+2)$, whose discriminant group is isomorphic to $D(L)=L^\vee/L$.  Let $\{\textbf{e}_\gamma: \gamma \in D(L)\}$ be the basis of the group ring $\CC[D(L)]$. Let $\Mp_2(\ZZ)$ be the metaplectic group which is a double cover of $\SL_2(\ZZ)$. We denote the Weil representation of $\Mp_2(\ZZ)$ on $\CC[D(L)]$ by $\rho_{D(L)}$ (see \cite[Section 1.1]{Br1}). Let $F$ be a nearly holomorphic vector-valued modular form for $\rho_{D(L)}$ of weight $k$ with  Fourier expansion  of the form
\begin{align*}
F(\tau)&=\sum_{\gamma\in D(L)}\sum_{n\in\ZZ-\frac{(\gamma,\gamma)}{2}} c(\gamma,n)q^n \textbf{e}_\gamma\\
&  =\sum_{\gamma\in D(L)} F_{\gamma}(\tau)\textbf{e}_\gamma.
\end{align*}
Here, the sum 
$$
\sum_{\gamma\in D(L)}\sum_{\substack{ n\in\ZZ-\frac{(\gamma,\gamma)}{2}\\ n<0}} c(\gamma,n)q^n \textbf{e}_\gamma
$$
is called the principal part of $F$.
We define the theta-function for the lattice $L$ as
\begin{equation}
\Theta_{\gamma}^{L}(\tau,\mathfrak{z})=\sum_{l \in \gamma +L}\exp\left(\pi i(l,l) \tau + 2\pi i(l,\mathfrak{z}) \right), \quad \gamma\in D(L).
\end{equation}
Then the function
\begin{equation}
\sum_{\gamma\in D(L)} F_{\gamma}(\tau)\Theta_{\gamma}^{L}(\tau,\mathfrak{z})
\end{equation}
is a weakly holomorphic Jacobi form of weight $k+\frac{1}{2}\rank (L)$ and index $L$. The principal part of $F$ corresponds to the singular Fourier coefficients of the above Jacobi form.
The map 
\begin{align*}
M_{k,\rho_{D(L)}} &\to J_{k+\frac{1}{2}\rank(L),L}\\
F(\tau) &\mapsto \sum_{\gamma\in D(L)} F_{\gamma}(\tau)\Theta_{\gamma}^{L}(\tau,\mathfrak{z})
\end{align*}
is an isomorphism sending holomorphic vector-valued modular forms to holomorphic Jacobi forms. Since the weights of holomorphic vector-valued modular forms are non-negative, the space $J_{k,L}$ of holomorphic Jacobi forms of weight $k$ and index $L$ is trivial if $k<\rank(L)/2$. The minimum possible weight $k=\rank(L)/2$ is called the singular weight.

The Borcherds product is a map from vector-valued modular forms for the Weil representation of $\Mp_2(\ZZ)$ to modular forms on orthogonal groups (see \cite{Bo1, Bo2}). By means of the isomorphism between vector-valued modular forms for the Weil representation and Jacobi forms, Gritsenko and Nikulin (\cite[Theorem 3.1]{G3} or \cite[Theorem 2.1]{GN}) gave a variant of Borcherds products, which lifts weakly holomorphic Jacobi forms of weight $0$ to modular forms on orthogonal groups. We denote the Borcherds product of a weakly holomorphic Jacobi form $\phi$ of weight $0$ by $\Borch(\phi)$.

We now recall the following weight raising differential operator which will be used later. Such technique can also be found in \cite{CK} for the general case or in \cite{EZ} for classical Jacobi forms.

\begin{lemma}\label{L1}
Let $\psi(\tau,\mathfrak{z})=\sum a(n,l)q^n\zeta^l$ be a weakly holomorphic Jacobi form of weight $k$ and index $L$. Then $H_k(\psi)$ is a weakly holomorphic Jacobi form of weight $k+2$ and index $L$, where 
\begin{align}
H_k(\psi)&=H(\psi)+(2k-\rank(L))G_2\psi,\\
H(\psi)(\tau,\mathfrak{z})&=\frac{1}{2}\sum_{n\in \ZZ}\sum_{l\in L^\vee} \left[2n-(l,l) \right]a(n,l)q^n\zeta^l,
\end{align}
and $G_2(\tau)=-\frac{1}{24}+\sum_{n\geq 1}\sigma(n)q^n$ is the Eisenstein series of weight $2$ on $\SL_2(\ZZ)$.
\end{lemma}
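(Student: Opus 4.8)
The plan is to realize $H_k$ as a genuine differential operator and then to check directly that $H_k(\psi)$ satisfies the two functional equations of a Jacobi form of weight $k+2$ and index $L$; the Fourier condition will come for free. Write $r=\rank(L)$, fix a basis $v_1,\dots,v_r$ of $L$ with Gram matrix $(G_{ij})=((v_i,v_j))$ and inverse $(G^{ij})$, and put $\mathfrak z=\sum_i z_iv_i$. Set $\Delta=\sum_{i,j}G^{ij}\partial_{z_i}\partial_{z_j}$. Since $\sum_{i,j}G^{ij}(l,v_i)(l,v_j)=(l,l)$, one gets $\Delta\zeta^l=-4\pi^2(l,l)\zeta^l$, while $\frac{1}{2\pi i}\partial_\tau q^n=nq^n$. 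Hence
\[
H=\frac{1}{2\pi i}\partial_\tau+\frac{1}{8\pi^2}\Delta
\]
multiplies the Fourier coefficient $a(n,l)$ exactly by $\tfrac12[2n-(l,l)]$, which is the operator in the statement. As $\psi$ and $G_2$ are holomorphic, so is $H_k(\psi)$; and since the coefficients of $H\psi$ and of $G_2\psi$ are still supported on $n\ge n_0$, the form $H_k(\psi)$ admits a Fourier expansion of the required shape, so it is weakly holomorphic.

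For the elliptic equation, comparing Fourier coefficients shows it is equivalent to $a(n,l)$ being constant on the orbits of the reindexing $(n,l)\mapsto(n+(l,x)+\tfrac12(x,x),\,l+x)$, $x\in L$ (the part with $y\in L$ is automatic because $l\in L^\vee$). The point is that this reindexing preserves the discriminant $D:=2n-(l,l)$. Since $H$ multiplies $a(n,l)$ by $\tfrac12 D$, the coefficients of $H\psi$ remain constant on these orbits, and $G_2\psi$ trivially satisfies the elliptic equation because $G_2$ is independent of $\mathfrak z$; hence $H_k(\psi)$ has the elliptic transformation of index $L$.

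The core of the argument is the modular equation. Because the Jacobi slash factors form a cocycle, it suffices to check invariance under the generators $T$ and $S$ of $\SL_2(\ZZ)$. For $T$ everything is invariant under $\tau\mapsto\tau+1$, so this case is immediate. For $S$ I would differentiate $\psi(-1/\tau,\mathfrak z/\tau)=\tau^k e^{i\pi(\mathfrak z,\mathfrak z)/\tau}\psi(\tau,\mathfrak z)$. Writing $w=-1/\tau$, $\mathfrak u=\mathfrak z/\tau$, $R=\tau^k e^{i\pi(\mathfrak z,\mathfrak z)/\tau}$ and $E=\sum_j z_j\partial_{z_j}$, the chain rule expresses $(\Delta_{\mathfrak u}\psi)$ and $(\partial_w\psi)$ at the image point through $\Delta$, $\partial_\tau$ and $E$ applied to $R\psi$; expanding by the product rule (using $\Delta R=\tfrac{2\pi i\,r}{\tau}R-\tfrac{4\pi^2(\mathfrak z,\mathfrak z)}{\tau^2}R$ and $\sum_iG^{ij}(\mathfrak z,v_i)=z_j$) and then forming $H$, the $E\psi$-terms and the $(\mathfrak z,\mathfrak z)$-terms cancel and one is left with
\[
(H\psi)(-1/\tau,\mathfrak z/\tau)=\tau^{k+2}e^{i\pi(\mathfrak z,\mathfrak z)/\tau}\Big[H\psi-\tfrac{i(2k-r)}{4\pi\tau}\,\psi\Big].
\]
On the other hand $G_2=-\tfrac1{24}E_2$ and the classical law for $E_2$ give $G_2(-1/\tau)=\tau^2G_2(\tau)+\tfrac{i\tau}{4\pi}$, so under $S$ the term $(2k-r)G_2\psi$ produces exactly the compensating anomaly $+\tfrac{i(2k-r)}{4\pi\tau}\psi$. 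The two anomalies cancel, $H_k(\psi)$ is $S$-invariant of weight $k+2$, and the modular equation follows.

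I expect the $S$-computation to be the only real obstacle: one must track every first- and second-order term generated by the chain rule and the product rule and verify the precise cancellations that isolate $\tau^2H\psi$ together with the single anomalous multiple of $\psi$, whose coefficient is exactly $2k-r=2k-\rank(L)$. The remaining ingredients, namely the cocycle reduction, the $T$-case, and the elliptic equation, are formal or reduce to the invariance of $D$.
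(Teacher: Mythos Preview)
Your argument is correct: realizing $H$ as the heat operator $\tfrac{1}{2\pi i}\partial_\tau+\tfrac{1}{8\pi^2}\Delta$, checking the elliptic law via invariance of the discriminant $2n-(l,l)$, and verifying the $S$-transformation by the chain-rule computation (which indeed yields exactly the anomaly $-\tfrac{i(2k-r)}{4\pi\tau}\psi$, cancelled by the quasi-modularity of $G_2$) is the standard proof, and all the cancellations you indicate do occur.

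The paper itself, however, does not prove this lemma at all: it simply recalls the operator and refers the reader to \cite{CK} for Jacobi forms of several variables and to \cite{EZ} for the classical case. So rather than matching the paper's proof, you have supplied one where the paper only gave a citation. The advantage of your write-up is that it is self-contained and makes explicit where the coefficient $2k-\rank(L)$ comes from (the $2k$ from differentiating the automorphy factor $\tau^k$, the $-\rank(L)$ from the Laplacian hitting $e^{i\pi(\mathfrak z,\mathfrak z)/\tau}$); the advantage of the paper's choice is brevity, since the result is well known.
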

\begin{proof}
Let $\{\alpha_1,...,\alpha_n\}$ be a basis of $L$ and $\{\alpha_1^*,...,\alpha_n^*\}$ be its dual basis. We write $\mathfrak{z}=\sum_{i=1}^nz_i\alpha_i \in L\otimes \CC$, $z_i\in\CC$. We define $\frac{\partial}{\partial\mathfrak{z}}=\sum_{i=1}^n\alpha_i^* \frac{\partial}{\partial z_i}$. Then we have that
$$
\left(\frac{\partial}{\partial\mathfrak{z}},\frac{\partial}{\partial\mathfrak{z}}\right)e^{2\pi i(l,\mathfrak{z})}=-4\pi^2(l,l)e^{2\pi i(l,\mathfrak{z})}
$$
and the operator $H(\cdot)$ is equal to the heat operator
$$
H=\frac{1}{2\pi i}\frac{\partial}{\partial\tau}+\frac{1}{8\pi^2}\left(\frac{\partial}{\partial\mathfrak{z}},\frac{\partial}{\partial\mathfrak{z}}\right).
$$
By formulas (3.5) and (3.7) in \cite[Lemma 3.3]{CK}, the transformations of the function $H(\psi)$ with respect to the actions of $\SL_2(\ZZ)$ and the Heisenberg group of $L$ are known. From these transformations, we see that $H(\psi)$ does not transform like a Jacobi form, so we make an automorphic correction by considering the quasi-modular Eisenstein series $G_2$ of weight $2$. By direct calculations, we can show that $H_k(\psi)$ is invariant under $\SL_2(\ZZ)$ and the Heisenberg group. Therefore, it is a weakly holomorphic Jacobi form of weight $k+2$ and index $L$.
\end{proof}

The next lemma plays a key role in our discussions on the weight of $2$-reflective modular forms and in the classification of dd-modular forms. It is a particular case of \cite[Proposition 2.2]{G3}. 

\begin{lemma}\label{L2}
Assume that $\phi$ is a weakly holomorphic Jacobi form of weight $0$ and index $L$ with the Fourier expansion
$$\phi(\tau,\mathfrak{z})=aq^{-1}+\sum_{l\in L^\vee}c(0,l)\zeta^l+O(q).$$ Then we have the following identity
$$\sum_{l\in L^\vee}c(0,l)-\frac{12}{\rank(L)} \sum_{l\in L^\vee}c(0,l)(l,l)-24a=0. $$
\end{lemma}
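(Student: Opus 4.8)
The plan is to apply the weight-raising operator of Lemma \ref{L1} with $k=0$ and then restrict to $\mathfrak{z}=0$, reducing the identity to a vanishing property of weight-$2$ modular forms for $\SL_2(\ZZ)$.

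First I would form $H_0(\phi)$. By Lemma \ref{L1} this is a weakly holomorphic Jacobi form of weight $2$ and index $L$, and since $2\cdot 0-\rank(L)=-\rank(L)$ we have
$$H_0(\phi)=H(\phi)-\rank(L)\,G_2(\tau)\,\phi.$$
I would then extract the coefficient of $q^0$ as a function of $\mathfrak{z}$. From the definition of $H$, the $q^0$-part of $H(\phi)$ is $-\tfrac12\sum_{l\in L^\vee}(l,l)c(0,l)\zeta^l$. For the product term, only two contributions survive in degree $q^0$: the constant $-\tfrac1{24}$ of $G_2$ paired with the $q^0$-part of $\phi$, and the $q^1$-coefficient $\sigma(1)=1$ of $G_2$ paired with the $q^{-1}$-coefficient $a$ of $\phi$. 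Here the hypothesis that the principal part of $\phi$ is exactly $aq^{-1}$ is essential, as it rules out any further pole contributing to $q^0$. This yields
$$[q^0]\,H_0(\phi)=-\frac12\sum_{l\in L^\vee}(l,l)c(0,l)\zeta^l+\frac{\rank(L)}{24}\sum_{l\in L^\vee}c(0,l)\zeta^l-\rank(L)\,a.$$

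Next I would set $\mathfrak{z}=0$. Putting $\mathfrak{z}=0$ in the first functional equation (2.1) shows that $g(\tau):=H_0(\phi)(\tau,0)$ is a weakly holomorphic modular form of weight $2$ for $\SL_2(\ZZ)$, whose $q^0$-coefficient is obtained from the display above by replacing each $\zeta^l$ by $1$:
$$[q^0]\,g=-\frac12\sum_{l\in L^\vee}(l,l)c(0,l)+\frac{\rank(L)}{24}\sum_{l\in L^\vee}c(0,l)-\rank(L)\,a.$$

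The crux is then the classical fact that a weakly holomorphic modular form of weight $2$ for $\SL_2(\ZZ)$ has vanishing constant term. I would justify this by the residue theorem: $g(\tau)\,d\tau$ is an $\SL_2(\ZZ)$-invariant meromorphic differential, hence descends to $X(1)\cong\PP^1$; it is holomorphic away from the cusp, with no residue at the elliptic points (using that weight-$2$ forms vanish there), and its residue at the cusp equals $\tfrac{1}{2\pi i}[q^0]g$. Since the total residue on $\PP^1$ is zero, $[q^0]g=0$. Equivalently, every such form equals $q\frac{d}{dq}$ of a polynomial in $j$, hence is a total derivative with no constant term. Equating the last display to zero and multiplying by $24/\rank(L)$ gives
$$\sum_{l\in L^\vee}c(0,l)-\frac{12}{\rank(L)}\sum_{l\in L^\vee}c(0,l)(l,l)-24a=0,$$
which is the asserted identity. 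The only non-formal ingredient is the vanishing of the weight-$2$ constant term; the remaining steps are routine coefficient bookkeeping for $H_0(\phi)$ together with the specialization $\mathfrak{z}=0$.
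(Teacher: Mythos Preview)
Your proof is correct and follows essentially the same approach as the paper: apply $H_0$ from Lemma~\ref{L1}, restrict to $\mathfrak{z}=0$ to obtain a weakly holomorphic weight-$2$ modular form for $\SL_2(\ZZ)$, and use that its constant term vanishes (the paper cites \cite[Lemma 9.2]{Bo1} for this fact, while you supply your own justification). One minor remark: your parenthetical ``using that weight-$2$ forms vanish there'' for the elliptic points is not quite right, but your alternative argument via $q\frac{d}{dq}$ of polynomials in $j$ is correct and suffices.
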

\begin{proof}
From Lemma \ref{L1}, it follows that $H_{0}(\phi)$ is a weakly holomorphic Jacobi form of weight $2$. Therefore $H_{0}(\phi)(\tau,0)$ is a nearly holomorphic modular form of weight $2$ for the full modular group $\SL_2(\ZZ)$. By \cite[Lemma 9.2]{Bo1},  $H_{0}(\phi)(\tau,0)$ has zero constant term, which establishes the desired formula.
\end{proof}

\section{2-reflective modular forms}\label{Sec:3}
In this section we study 2-reflective modular forms. We prove our first main theorem and give an application of our results.
\subsection{Non-existence of 2-reflective modular forms}
Let $M$ be an even integral lattice of signature $(2,n)$, $n\geq 3$, and let
\begin{equation}
\cD(M)=\{[\omega] \in  \PP(M\otimes \CC):  (\omega, \omega)=0, (\omega,\bar{\omega}) > 0\}^{+}
\end{equation}
be the associated Hermitian symmetric domain of type IV (here $+$ denotes one of its two connected components). Let us denote the index $2$ subgroup of the orthogonal group $\Orth(M)$ preserving $\cD(M)$ by $\Orth^+ (M)$.  We define
\begin{equation}
\cH =\bigcup_{\substack{l \in M\\ (l,l)=-2}} l^{\perp}\cap \cD(M) 
\end{equation}
as the Heegner divisor of $\cD(M)$ generated by the $(-2)$-vectors in $M$.

\begin{definition}
Let $F$ be a non-constant holomorphic modular form on $\cD(M)$ with respect to a finite index subgroup $\Gamma < \Orth^+ (M)$ and a character (of finite order) $\chi : \Gamma \to \CC$. The function $F$ is called $2$-reflective if the support of its zero divisor is contained in $\cH$. A lattice $M$ is called $2$-reflective if it admits a $2$-reflective modular form.
\end{definition}

Following \cite{Ma1}, we consider the decomposition (\ref{F1}) of the $(-2)$-Heegner divisor $\cH$. Let $A_M=M^\vee / M$ be the discriminant group of $M$. We denote the most important subgroup of $\Orth^+ (M)$ acting trivially on $A_M$ by $\widetilde{\Orth}^+(M)$. The invariants of $\widetilde{\Orth}^+(M)$-orbit of a primitive vector $l\in M$ are its norm $(l,l)$ and its image $l/ \div(l)\in A_M$ where $\div (l)$ is the positive integer generating the ideal $(l,M)$. From this point of view, we choose the following notations.  For $\lambda \in A_M$ and $m\in \QQ$ we put
$$ \cH(\lambda,m)=\bigcup_{\substack{l \in M+\lambda \\ (l,l)=2m}} l^{\perp}\cap \cD(M) $$
as the Heegner divisor of discriminant $(\lambda,m)$. In particular, $\cH=\cH (0,-1)$. Let $\pi_M\subset A_M$ be the subset of elements of order $2$ and norm $-1/2$. For each $\mu \in \pi_M$ we abbreviate $\cH(\mu,-1/4)$ by $\cH_\mu$. We also set 
$$\cH_0 = \bigcup_{\substack{l \in M, (l,l)=-2\\ \div (l)=1}} l^{\perp}\cap \cD(M). $$
Then we have the following decomposition
\begin{equation}\label{F1}
\cH= \cH_0+ \sum_{\mu \in \pi_M} \cH_\mu.
\end{equation}

By \cite[Lemma 2.2]{Ma1}, if $M$ admits a 2-reflective modular form with respect to some $\Gamma_0< \Orth^+(M)$, then $M$ also has a 2-reflective modular form with respect to any other finite index subgroup $\Gamma <\Orth^+(M)$. Thus, the lattice $M$ is 2-reflective if and only if it admits a 2-reflective modular form with respect to $\widetilde{\Orth}^+(M)$. Throughout this section,  we only consider 2-reflective modular forms with respect to $\widetilde{\Orth}^+(M)$.

Next, we assume that the lattice $M$ contains $2U$ and $M=2U\oplus L(-1)$, where $L$ is a positive-definite even lattice and $U$ is a hyperbolic plane. In this case, each $\cH_*$ is an $\widetilde{\Orth}^+(M)$-orbit of a single quadratic divisor $l^{\perp}\cap \cD(M)$ and it is irreducible. We can write each element of $\pi_M$ in the form $\mu =(0,n_\mu,\mu_0/2,1,0)$, where $n_\mu \in \ZZ$, $\mu_0 \in L$ and $2n_\mu-\frac{1}{4}(\mu_0,\mu_0)=-\frac{1}{2}$.  If $M$ admits a $2$-reflective modular form $F$ of weight $k$, then its divisor can be written as 
\begin{equation}\label{d}
\begin{split}
\div (F)&= \beta_0 \cH_0 + \sum_{\mu \in \pi_M} \beta_\mu \cH_\mu \\
         &=\beta_0 \cH + \sum_{\mu \in \pi_M} (\beta_\mu-\beta_0) \cH_\mu,
\end{split}
\end{equation}
where $\beta_*$ are non-negative integers. By \cite[Theorem 5.12]{Br1} or \cite[Theorem 1.2]{Br2}, there exists a nearly holomorphic vector-valued modular form $f$ of weight $-\rank(L)/2$ with respect to the Weil representation $\rho_M$ of $\Mp_2(\ZZ)$ on the group ring $\CC[A_M]$ with principal part
$$\beta_0 q^{-1} \textbf{e}_0+ \sum_{\mu \in \pi_M} (\beta_\mu-\beta_0)q^{-1/4}\textbf{e}_\mu,$$
such that $F$ is the Borcherds product of $f$. In view of the isomorphism between vector-valued modular forms and Jacobi forms, there exists a weakly holomorphic Jacobi form $\phi_L$ of weight $0$ and index $L$ with singular Fourier coefficients of the form (see \cite{CG2})
\begin{equation}\label{F2}
sing(\phi_L)= \beta_0 \sum_{r\in L}q^{(r,r)/2-1}\zeta^r+ \sum_{\mu \in \pi_M}(\beta_\mu - \beta_0)\sum_{s\in L+ \mu_0/2}q^{(s,s)/2-1/4}\zeta^s
\end{equation}
where $\zeta^l=e^{2\pi i (l,\mathfrak{z})}$.
Thus, we have
\begin{equation}\label{F3}
\phi_L(\tau,\mathfrak{z})= \beta_0 q^{-1}+\beta_0 \sum_{r\in R(L)}\zeta^r +2k+ \sum_{\substack{u\in \pi_M}} (\beta_\mu -\beta_0) \sum_{s\in R_\mu (L)}\zeta^s + O(q)
\end{equation}
here and subsequently, $R(L)$ denotes the set of 2-roots in $L$ and 
\begin{equation}\label{F4}
R_\mu (L)=\{ s\in L^\vee : 2s\in R(L), s-\mu_0/2 \in L \}.
\end{equation}

\smallskip
 
With the help of equation (\ref{F3}) and Lemma \ref{L2}, we get the following theorem. 
\begin{theorem}\label{th1}
Let $L$ be a positive-definite even lattice and $M=2U\oplus L(-1)$. Suppose that $F$ is a $2$-reflective modular form of weight $k$ with divisor of the form \eqref{d}.
Then the weight $k$ of $F$ is given by the following formula
\begin{equation}\label{F5}
\begin{split}
k=&\beta_0\left[ 12+ \lvert R(L) \rvert \left(\frac{12}{\rank(L)}-\frac{1}{2} \right) \right]\\
&+\left(\frac{3}{\rank(L)}-\frac{1}{2} \right)\sum_{\mu \in \pi_M} (\beta_\mu - \beta_0 )\lvert R_\mu (L) \rvert.
\end{split}
\end{equation}
\end{theorem}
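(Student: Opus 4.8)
The plan is to feed the weight-$0$, index-$L$ Jacobi form $\phi_L$ from equation (\ref{F3}) directly into the identity of Lemma \ref{L2} and solve the resulting linear relation for $k$. Since $\phi_L$ is a weakly holomorphic Jacobi form of weight $0$ and index $L$ by construction, Lemma \ref{L2} applies verbatim, and the only work left is to read off the relevant Fourier data from (\ref{F3}) and evaluate the two sums appearing in the lemma.

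First I would record the coefficients. Comparing (\ref{F3}) with the expansion in Lemma \ref{L2}, the $q^{-1}$-coefficient is $a=\beta_0$, the constant term is $c(0,0)=2k$, the coefficient at each $2$-root $r\in R(L)$ is $c(0,r)=\beta_0$, and the coefficient at each $s\in R_\mu(L)$ is $c(0,s)=\beta_\mu-\beta_0$. These three families of vectors lie in distinct cosets of $L$ in $L^\vee$ — the origin $0$, the trivial coset $L$ containing $R(L)$, and the nontrivial cosets $L+\mu_0/2$ containing $R_\mu(L)$ (since $\mu$ has order two in $A_M$) — so their contributions to each sum add with no overlap.

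Next I would compute the two sums, the key point being the norms. Every $r\in R(L)$ has $(r,r)=2$, while the definition (\ref{F4}) forces $2s\in R(L)$ for $s\in R_\mu(L)$, hence $(s,s)=\tfrac14(2s,2s)=\tfrac12$. Writing $S:=\sum_{\mu\in\pi_M}(\beta_\mu-\beta_0)\lvert R_\mu(L)\rvert$, this gives
\begin{align*}
\sum_{l\in L^\vee}c(0,l)&=2k+\beta_0\lvert R(L)\rvert+S,\\
\sum_{l\in L^\vee}c(0,l)(l,l)&=2\beta_0\lvert R(L)\rvert+\tfrac12 S.
\end{align*}
Substituting these together with $a=\beta_0$ into the identity $\sum_l c(0,l)-\tfrac{12}{\rank(L)}\sum_l c(0,l)(l,l)-24a=0$ of Lemma \ref{L2} and solving the resulting single linear equation for $k$ yields exactly formula (\ref{F5}).

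The computation itself is routine linear algebra in one unknown; the only points demanding care are the correct identification of the constant term $c(0,0)=2k$ (which encodes that $F$ is the Borcherds product of weight $k$) and the half-integral norm $(s,s)=\tfrac12$ of the vectors in $R_\mu(L)$. It is precisely the interplay of these norms with the factor $\tfrac{12}{\rank(L)}$ that produces the coefficients $\tfrac{12}{\rank(L)}-\tfrac12$ and $\tfrac{3}{\rank(L)}-\tfrac12$ in the final formula, so I expect no genuine obstacle beyond this bookkeeping.
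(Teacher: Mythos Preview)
Your proposal is correct and follows exactly the approach the paper itself indicates: plug the $q^0$-term of $\phi_L$ from (\ref{F3}) into Lemma~\ref{L2} and solve for $k$. The paper states this in one line without writing out the sums, so your version is simply a fuller execution of the same argument; the only minor slip is that $0$ and $R(L)$ lie in the \emph{same} coset (the trivial one), but the disjointness you need follows anyway from the distinct norms $0$, $2$, and $\tfrac12$.
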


\begin{remark}\label{r3.3}
From the above theorem, we have
\begin{enumerate}
\item If $R(L)$ is empty, then the weight of $2$-reflective modular form is $12\beta_0$.
\item When $\rank (L) \geq 6$, there is no $2$-reflective modular form with $\beta_0=0$. This fact can also be proved by Riemann--Roch theorem as the proof of \cite[Proposition 6.1]{S4}.
\item When $\rank (L) = 6$, the weight of $2$-reflective modular form is $\beta_0(12+\frac{3}{2}\lvert R(L) \rvert)$ and the modular form is not of singular weight.
\end{enumerate}
\end{remark}

We next study modular forms with  complete $2$-divisor (i.e. $\div(F)=\cH$), which are the simplest 2-reflective modular forms.
\begin{theorem}\label{th}
If there exists a modular form with complete $2$-divisor for $M=2U\oplus L(-1)$, then either $\rank(L)\leq 8$, or $L$ is a unimodular lattice of rank $16$ or $24$. Moreover, the weight of the corresponding modular form is 
\begin{equation}
k=12+ \lvert R(L) \rvert \left(\frac{12}{\rank(L)}-\frac{1}{2} \right).
\end{equation}
\end{theorem}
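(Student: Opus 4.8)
The weight formula is immediate: putting $\beta_0=1$ and $\beta_\mu=\beta_0$ (so that every correction term $\beta_\mu-\beta_0$ vanishes) in Theorem \ref{th1} collapses (\ref{F5}) to the asserted expression. So the whole content of the statement is the claim about $\rank(L)$. Write $n=\rank(L)$. By (\ref{F3}) the associated input Jacobi form has the rigid shape
$$\phi_L=q^{-1}+\sum_{r\in R(L)}\zeta^r+2k+O(q),$$
and its singular part is the nonzero theta-like series $\sum_{r\in L}q^{(r,r)/2-1}\zeta^r$. The plan is to exploit this rigidity through the weight-raising operator of Lemma \ref{L1}.

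First I would manufacture holomorphic Jacobi forms of small weight out of $\phi_L$. Iterating $H_k$ produces $\phi_{2j}:=H_{2j-2}\cdots H_2H_0(\phi_L)$ of weight $2j$ and index $L$. Since $H$ multiplies the coefficient of $q^m\zeta^l$ by $\tfrac12\bigl(2m-(l,l)\bigr)$ while multiplication by $G_2$ only raises the $q$-order, the singular part — which sits entirely on the line $2m-(l,l)=-2$ — is merely rescaled at each step, giving $\operatorname{sing}(\phi_{2j})=c_{2j}\operatorname{sing}(\phi_L)$ with $c_{2j}=\prod_{i=1}^{j}\frac{n-24-4(i-1)}{24}$. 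Multiplying $\phi_L$ by a normalized weight-$2j$ Eisenstein monomial $E_{2j}$ leaves the singular part unchanged, so
$$\Xi_{2j}:=\phi_{2j}-c_{2j}\,E_{2j}\,\phi_L$$
is a genuinely holomorphic Jacobi form of weight $2j$ and index $L$; likewise $\Delta\phi_L$ is holomorphic of weight $12$. Since $L$ is positive definite, each $\Xi_{2j}$ has $q^0$-part a single scalar $C_{2j}$, which I would read off from the two-term recursion satisfied by the $\zeta^0$-coefficients of the $\phi_{2j}$; the first one is
$$C_4=\frac{n-14}{6}\,k-\frac{n^2-46n+568}{2}.$$

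The rank bound then comes from the singular-weight inequality: a nonzero holomorphic Jacobi form of weight $w$ and index $L$ forces $w\ge n/2$. Thus $\Delta\phi_L\neq0$ already yields $n\le 24$, and if $\Xi_4\neq0$ then $n\le 8$. Hence for $n\ge 9$ one must have $\Xi_4=0$, i.e. $C_4=0$, and together with the weight formula this pins down $|R(L)|=\dfrac{6n(26-n)}{n-14}$. This value is negative for $9\le n\le 13$, is impossible at $n=14$ (where $C_4=-60\neq0$ identically), and is non-integral at $n=19$ and $n=25$, so all of these are excluded. For the survivors I would invoke the next identity $C_6=0$, which is forced whenever $n>12$ because then $\Xi_6=0$; a short computation shows $C_6\neq0$ at $n\in\{15,17,18,20,21,22,23\}$ while $C_6=0$ at $n\in\{16,24\}$. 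This leaves exactly $n\le 8$ or $n\in\{16,24\}$.

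It remains to force unimodularity when $n\in\{16,24\}$. The case $n=24$ is clean: here $\Delta\phi_L$ is holomorphic of weight $12$, equal to the singular weight, hence a singular (theta) form, and its level-$0$ part is forced by the principal part of $\phi_L$ to be $\Theta_L=\theta_{L,0}$; matching theta-components gives $\Delta\phi_L=\theta_{L,0}$, and invariance of $\theta_{L,0}$ alone under $\SL_2(\ZZ)$ forces the discriminant group $L^\vee/L$ to be trivial, i.e. $L$ unimodular. The genuinely delicate point — the step I expect to be the main obstacle — is $n=16$: there $|R(L)|=480$ is met by the non-unimodular root lattice $D_{16}$ as well as by $E_8\oplus E_8$ and $D_{16}^+$, and since $C_4,C_6$ depend only on $n$ and $k$ they cannot separate these. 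Excluding the non-unimodular lattices must use the finer input that the complete-divisor condition $\beta_\mu=\beta_0$ removes all $\mathbf{e}_\mu$-components, so $\phi_L$ has principal part supported purely on $\mathbf{e}_0$; one then has to show, via the Borcherds--Bruinier obstruction pairing $q^{-1}\mathbf{e}_0$ against the weight-$10$ cusp forms for the Weil representation $\rho_L$ (equivalently a theta-contraction argument at the singular weight $8$), that such a $\phi_L$ cannot exist unless $\rho_L$ is trivial, i.e. unless $L$ is unimodular. Carrying out this obstruction analysis, and verifying the holomorphicity and the constant-term recursion underlying $C_4$ and $C_6$, is where the real work lies.
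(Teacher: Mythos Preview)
Your overall architecture coincides with the paper's: raise $\phi_L$ to small even weights by Lemma~\ref{L1}, cancel the singular part against $E_{2j}\phi_L$, and kill the resulting holomorphic Jacobi forms with the singular-weight bound. The paper's $g$ and $h$ are (up to harmless scalars) your $\Xi_4$ and a second weight-$6$ combination $E_6\phi-E_4f_2$; the constant-term equations they yield are equivalent to your $C_4=0$ and $C_6=0$, and your formula $C_4=\tfrac{n-14}{6}k-\tfrac{n^2-46n+568}{2}$ and the consequence $|R(L)|=\tfrac{6n(26-n)}{n-14}$ check out. Your treatment of $n=24$ via $\Delta\phi_L$ is in fact cleaner than the paper's, which simply invokes \cite{Ma1} and then restricts to $n\le 23$.

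The one genuine gap is $n=16$, which you correctly flag but overestimate. The paper does \emph{not} use the Borcherds--Bruinier obstruction pairing; it carries out exactly the ``theta-contraction argument at the singular weight $8$'' you mention parenthetically, and it is short. From $h=E_6\phi-E_4f_2=0$ one reads off, by comparing Fourier coefficients on the isotropic line $2m=(l,l)$ for $l\notin L$ (where the singular part contributes nothing), that $c(m,l)=0$ there. Then $E_8\phi-E_6f_2$ is holomorphic of weight $8=n/2$, hence singular, and the vanishing just obtained forces it to be supported on the trivial coset, i.e.\ equal to $1728\sum_{l\in L}q^{(l,l)/2}\zeta^l$. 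Now the \emph{same} argument you gave at $n=24$ applies: a Jacobi theta function $\Theta_L$ that is modular for the full $\SL_2(\ZZ)$ corresponds to the invariant vector $\mathbf e_0\in\CC[A_L]$, which forces $|A_L|=1$. So the ``real work'' you anticipated is a two-line transplant of your own $n=24$ idea, preceded by a coefficient comparison on $h=0$; no obstruction space needs to be analyzed.
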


\begin{proof}
Firstly, the above formula is a direct result of Theorem \ref{th1}.
Let $F$ be a modular form with complete $2$-divisor. Then there exists a weakly holomorphic Jacobi form of weight $0$ and index $L$ such that
$$ \phi(\tau,\mathfrak{z})=q^{-1}+\sum_{r\in R(L)}\zeta^r +2k + O(q)$$
whose singular Fourier coefficients are (see formula (\ref{F2}))
$$ sing (\phi)=\sum_{n\geq-1} \sum_{\substack{l\in L\\ (l,l)=2n+2}}q^n \zeta^l,$$
with $\Borch(\phi)=F$. By \cite{Ma1}, it is known that $\rank(L)<24$ or $L$ is a unimodular lattice of rank $24$. Let us assume that $\rank(L)\leq 23$ and let us construct two special Jacobi forms by using the differential operators introduced in Lemma \ref{L1}. For simplicity, we set $R=\lvert R(L) \rvert$ and $n_0=\rank(L)$.
\begin{align*}
f_2(\tau,\mathfrak{z})&=\frac{24}{n_0-24}H_0(\phi)(\tau,\mathfrak{z})\\
&=q^{-1}+\sum_{r\in R(L)}\zeta^r-R +O(q) \in J_{2,L}^{w.h.}\\
f_4(\tau,\mathfrak{z})&=\frac{24}{n_0-28}H_2(f_2)(\tau,\mathfrak{z})\\
&=q^{-1}+\sum_{r\in R(L)}\zeta^r-\frac{(R+24)(n_0-4)}{n_0-28}  +O(q) \in J_{4,L}^{w.h.}
\end{align*}
Let $E_4$ and $E_6$ denote the Eisenstein series on $\SL_2(\ZZ)$ of weight $4$ and $6$, respectively.
Then we can check that
\begin{align*}
g(\tau,\mathfrak{z})&=\frac{n_0-28}{48}\left[E_4(\tau)\phi(\tau,\mathfrak{z})-f_4(\tau,\mathfrak{z})\right]\\
&=R\left(1-\frac{14}{n_0} \right)+6(n_0-26)+ O(q)\in J_{4,L}
\end{align*}
and
\begin{align*}
h(\tau,\mathfrak{z})&=E_6(\tau)\phi(\tau,\mathfrak{z})-E_4(\tau)f_2(\tau,\mathfrak{z})\\
&=\frac{24R}{n_0}-720+ O(q) \in J_{6,L}
\end{align*}
are holomorphic Jacobi forms of weight $4$ and $6$, respectively. In fact, the singular Fourier coefficients are stable under the actions of the differential operators, so the singular Fourier coefficients of $f_2$ and $f_4$ come from $sing(\phi)$. In order to check $g$ and $h$ are holomorphic Jacobi forms, we only need to check that $g$ and $h$ have no singular Fourier coefficient i.e. the singular part $sing(\phi)$ has been cancelled by the above combinations of $\phi$, $f_2$ and $f_4$.

Since the singular weight of holomorphic Jacobi form of index $L$ is $\frac{n_0}{2}$, we deduce that  $g=0$ if $n_0>8$ and $h=0$ if $n_0>12$. By direct calculations, we have
\begin{itemize}
\item when $R=0$, $g \neq 0$ if $n_0<24$.
\item when $R>0$, $g \neq 0$ if $n_0\leq 14$. 
\item when $n_0\geq 15$,  $g = 0$ and $h=0$ if and only if $n_0=16$ and $R=480$.
\end{itemize}

When $n_0=16$, from $h=0$, it follows that the Fourier coefficients of $\phi$ satisfy:
$c(n,l)=0$ if $2n-(l,l)=0$ and $l\not\in L$. Otherwise, there exists a Fourier coefficient $c(n,l)\neq 0$ with  $2n-(l,l)=0$ and $l\not\in L$. Assume that $c(n,l)$ is such Fourier coefficient with the smallest $n$. Then the coefficient of $q^n\zeta^l$ in $E_6\phi$ is $c(n,l)$ and the coefficient of $q^n\zeta^l$ in $E_4f_2$ is $-2c(n,l)$. Thus, the coefficient of $q^n\zeta^l$ in $h$ is not zero and then $h\neq 0$, which leads to a contradiction. 
Therefore, the following holomorphic Jacobi form of singular weight 8
$$E_8\phi-E_6f_2=1728+\sum_{\substack{n>0,l\in L^\vee\\ 2n=(l,l)}}a(n,l)q^n\zeta^l \in J_{8,L}$$
satisfies the same condition: $a(n,l)=0$ if $2n-(l,l)=0$ and $l\not\in L$. We then obtain 
$$E_8\phi-E_6f_2=1728\sum_{l\in L}q^{\frac{(l,l)}{2}}\zeta^l$$
and $L$ has to be unimodular. The proof is completed.
\end{proof}

\begin{remark}
It is worth pointing out that there exist lattices $L$ which admit a modular form with complete 2-divisor when $1\leq \rank(L)\leq 8$ (see \cite{GN3}). For example, the Igusa cusp form $\chi_{35}$ is a modular form with complete 2-divisor.
\end{remark}

We are going to generalize our method to prove the non-existence of $2$-reflective modular forms in higher dimensions.

\begin{theorem}\label{th2}
Suppose that $M=2U\oplus L(-1)$ is a $2$-reflective lattice satisfying $\rank(L)\geq 13$. Then either $\rank(L)=17$, or $L$ is a unimodular lattice of rank $16$ or $24$. Furthermore, when $\rank(L)=17$, the weight of the corresponding $2$-reflective modular form is $75\beta_0$, where $\beta_0$ is the multiplicity of the divisor $\cH_0$.
\end{theorem}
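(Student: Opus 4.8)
The strategy is to reproduce the argument of Theorem~\ref{th} in the presence of the components $\cH_\mu$. By the Borcherds--Jacobi correspondence recalled before Theorem~\ref{th1}, a $2$-reflective form $F$ of weight $k$ for $M=2U\oplus L(-1)$ with divisor \eqref{d} yields a weakly holomorphic Jacobi form $\phi_L\in J^{w.h.}_{0,L}$ with expansion \eqref{F3}; write $n_0=\rank(L)$. The observation I would isolate at the outset is that the coefficients of $\zeta^0$ in $\phi_L$ are just the pole $\beta_0q^{-1}$ and the constant $2k$, while the root data $R(L)$ and the sets $R_\mu(L)$ contribute only to coefficients of $\zeta^l$ with $l\neq 0$; moreover the singular Fourier coefficients (those with $2n-(l,l)<0$) come in exactly two flavours, of discriminant $-2$ (value $\beta_0$) and of discriminant $-1/2$ (value $\beta_\mu-\beta_0$).

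Following Theorem~\ref{th} I would apply the raising operator $H_k$ of Lemma~\ref{L1} repeatedly and multiply by $E_4,E_6,E_8$. The new feature is that on a singular coefficient of discriminant $D$ the operator $H_k$ acts by the scalar $\frac{D}{2}+\frac{n_0-2k}{24}$, so it rescales the discriminant $-2$ part by $\frac{n_0-2k-24}{24}$ and the discriminant $-1/2$ part by the different factor $\frac{n_0-2k-6}{24}$; hence the naive weight-$4$ and weight-$6$ analogues of $g$ and $h$ cancel the pole but retain the $\mu$-part and are not holomorphic. To kill both flavours at once I would combine three equal-weight blocks built from $\phi_L$, $H_0(\phi_L)$ and $H_4H_2H_0(\phi_L)$ (using $E_6,E_4$ and the identity, so as to avoid the non-modular $E_2$). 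The least weight admitting three such blocks is $6$, and there is a parallel weight-$8$ construction. These holomorphic forms vanish whenever their weight lies below the singular weight $n_0/2$: the weight-$6$ form vanishes for all $n_0\ge 13$, the weight-$8$ form for $n_0\ge 17$, and the latter sits exactly at singular weight when $n_0=16$.

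Here is the point that makes the method effective: since the constant ($\zeta^0$) coefficient of every block depends only on $k$ and $\beta_0$, the vanishing of these forms gives scalar identities linear in $k$ and $\beta_0$, forcing $k=\kappa_w(n_0)\beta_0$ for explicit rational functions $\kappa_w$. A direct computation gives $\kappa_6(n_0)=\bigl(2n_0^3-111n_0^2+1990n_0-11352\bigr)/(n_0-14)$. Because $\beta_0\ge 1$ (Remark~\ref{r3.3}) and $k>0$, this already eliminates $n_0=13$ (where $\kappa_6=-153$) and $n_0=14$ (where the coefficient of $k$ vanishes but that of $\beta_0$ does not). For $n_0=15$ one has $\kappa_6=273$, and feeding this back into \eqref{F5}, using $P:=\sum_\mu(\beta_\mu-\beta_0)|R_\mu(L)|\ge -\beta_0|R(L)|$ (from $\beta_\mu\ge 0$ and $\sum_\mu|R_\mu(L)|\le |R(L)|$), forces $|R(L)|\ge 435$, exceeding the maximal number $2\cdot 15\cdot 14=420$ of roots in a rank-$15$ root system (attained by $D_{15}$), a contradiction. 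For $17\le n_0\le 23$ the weight-$8$ form also vanishes, imposing $k=\kappa_8(n_0)\beta_0$, and the two determinations of $k$ coincide only at $n_0=17$ (where $\kappa_6(17)=75$); thus $18\le n_0\le 23$ are excluded and $n_0=17$ survives with weight $75\beta_0$.

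It remains to treat $n_0=16$ and the top range. For $n_0=16$ the weight-$8$ form is of singular weight, hence a combination of theta series; repeating the final step of the proof of Theorem~\ref{th} (the identity $E_8\phi-E_6f_2=1728\sum_{l\in L}q^{(l,l)/2}\zeta^l$, now adapted to cancel the $\mu$-part) forces $L$ to be unimodular, so that $\pi_M=\emptyset$ and no $\cH_\mu$ occurs; the weight is then $\kappa_6(16)=132$, consistent with $E_8\oplus E_8$. Finally Ma's dichotomy $\rank(L)\le 23$ or $L$ unimodular of rank $24$ from \cite{Ma1} closes off $n_0\ge 24$. The step I expect to be the real obstacle is the three-block cancellation of the two singularity types together with the explicit determination of $\kappa_6$ and especially $\kappa_8$, since the latter requires pushing $\phi_L$ through four applications of $H_k$ and tracking the constant term; the companion difficulty is adapting the rank-$16$ unimodularity argument to the situation where $\mu$-components are a priori present.
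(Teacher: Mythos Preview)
Your weight-$6$ block $\varphi_6$ and the values $\kappa_6(13),\kappa_6(14),\kappa_6(17)$ agree with the paper, so the ranks $13,14$ are disposed of exactly as there. Your root-counting argument for $n_0=15$ is correct and is a pleasant alternative: the inequality $\sum_\mu|R_\mu(L)|\le|R(L)|$ holds because the sets $R_\mu(L)$ are disjoint and inject into $R(L)$ via $s\mapsto 2s$, and the bound $|R(L)|\le 420$ follows since the $2$-roots of $L$ form a simply-laced root system of rank $\le15$.

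Where your route diverges from the paper is in the range $15\le n_0\le 23$. Instead of building a weight-$8$ holomorphic form that kills both $S_1$ and $S_2$ and comparing $\kappa_6$ against a second rational function $\kappa_8$, the paper takes $g:=E_4\phi-f_4$, which kills only $S_1$; its residual singular part is $(1-c_2)S_2$ with hyperbolic norm $-1/2$, so $\eta^6g$ is a \emph{holomorphic} Jacobi form of weight $7$ (with a character). For $n_0\ge15$ this lies strictly below the singular weight $n_0/2$, hence $g=0$. That single vanishing immediately yields the dichotomy: either $S_2=0$, in which case the divisor is the complete $2$-divisor and Theorem~\ref{th} applies verbatim (giving $L$ unimodular of rank $16$), or $1-c_2=0$, which is the equation $(n_0-6)(n_0-10)=(n_0-24)(n_0-28)$ with unique solution $n_0=17$; the constant-term condition then reads $d+240\beta_0=d_2$, i.e.\ $k=75\beta_0$. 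This bypasses both the $\kappa_8$ computation and any separate treatment of $n_0=15$ and $n_0=16$.

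The genuine gap in your proposal is the case $n_0=16$. Your weight-$8$ form cancels $S_1$ and $S_2$ and therefore lies at singular weight, so it is a combination $\sum_{\mu}a_\mu\theta_{L,\mu}$; but nothing in your construction forces $a_\mu=0$ for $\mu\neq0$, which is what you would need to conclude that the form is $1728\,\theta_{L,0}$ and hence that $L$ is unimodular. In the paper's Theorem~\ref{th} that coefficient vanishing came from the \emph{two-term} identity $h=E_6\phi-E_4f_2=0$, which (because $S_2=0$ there) gives a clean recursion showing $c(n,l)=0$ whenever $2n=(l,l)$ and $l\notin L$. Your three-term $\varphi_6=0$ does not yield this directly when $S_2\neq0$, so the ``adaptation'' you invoke is not automatic. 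The $\eta^6g$ device resolves this cleanly by first forcing $S_2=0$ at $n_0=16$, after which Theorem~\ref{th} applies without modification.
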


\begin{proof}
If $M$ has a $2$-reflective modular form $F$ of weight $k$ with divisor of the form (\ref{d}), then there exists a weakly holomorphic Jacobi form $\phi$ of weight $0$ and index $L$ with singular Fourier coefficients of the form (\ref{F2}). Let us assume that $\rank(L)\leq 23$. We next construct a holomorphic Jacobi form of weight $6$ from $\phi$. We write $\phi=S_1 + d + S_2 + \cdots$, where $S_1$ and $S_{2}$ are the first and second terms in (\ref{F2}), respectively, and $d=2k$. It is clear that $\phi - S_1 -S_2$ does not have the term with negative hyperbolic norm. We can construct ($n_0=\rank(L)$)
\begin{align*}
f_2&=\frac{24}{n_0-24}H_0(\phi)=S_1+ d_1 +c_1 S_2 +\cdots \in J_{2,L}^{w.h.},\\
f_4&=\frac{24}{n_0-28}H_2(f_2)=S_1+ d_2 +c_2 S_2 +\cdots \in J_{4,L}^{w.h.},\\
f_6&=\frac{24}{n_0-32}H_4(f_4)=S_1+ d_3 +c_3 S_2 +\cdots \in J_{6,L}^{w.h.},
\end{align*}
where
\begin{align*}
&d_1=\frac{n_0(d-24\beta_0)}{n_0-24},& &c_1=\frac{n_0-6}{n_0-24},&\\
&d_2=\frac{(n_0-4)(d_1-24\beta_0)}{n_0-28},& &c_2=c_1\frac{n_0-10}{n_0-28},&\\
&d_3=\frac{(n_0-8)(d_2-24\beta_0)}{n_0-32},& &c_3=c_2\frac{n_0-14}{n_0-32}.&
\end{align*}
The function
$$\varphi_6=(c_1-c_3)E_6\phi+(c_3-1)E_4f_2+(1-c_1)f_6=u+O(q) \in J_{6,L}$$
where 
$$u=(d-504\beta_0)(c_1-c_3)+(d_1+240\beta_0)(c_3-1)+d_3(1-c_1)$$
is a holomorphic Jacobi form of weight $6$ because the potential singular Fourier coefficients $S_1$ and $S_2$ have been cancelled.

In view of the singular weight, $\varphi_6=0$ if $\rank(L)\geq 13$. From Remark \ref{r3.3}, we know that if $F$ exists then $d=2k\geq \rank(L)$ and $\beta_0>0$ when $\rank{L}\geq 6$.  By direct calculations, when $n_0=13$ or $14$, $u\neq 0$, which is impossible.

We next assume that $15\leq\rank(L)\leq 23$.  We construct
$$ g=E_4\phi - f_4 =(d+240\beta_0)-d_2 + (1-c_2)S_2+\cdots \in J_{4,L}^{w.h.}.$$
Since $g$ only has singular Fourier coefficients of the form $S_2$, the minimum possible hyperbolic norm of its Fourier coefficients is $-\frac{1}{2}$. Therefore,  $\eta^6g$ is a holomorphic Jacobi form of weight 7 and index $L$ with character, where $\eta$ is the Dedekind eta function. In view of the singular weight, we have that $\eta^6g=0$ and then $g=0$. If $S_2=0$, then $F$ is a $2$-reflective modular form with complete $2$-divisor, which gives that $L$ is a unimodular lattice of rank $16$ by Theorem \ref{th}. If $S_2\neq 0$, then $1-c_2=0$, which gives $n_0=17$. By $(d+240\beta_0)-d_2=0$ and $n_0=17$, we get $d=150\beta_0$. We hence complete the proof.
\end{proof}

\begin{remark}\label{r1}
Firstly, there exist $2$-reflective lattices when $1\leq \rank(L) \leq 8$. When $\rank(L)=11,12$, we do not know if there exists any $2$-reflective lattice. When $\rank(L)=9,10,17$, there exist $2$-reflective lattices. They are constructed as follows:
\begin{itemize}
\item[(a)] $L=E_8\oplus A_1$:  $\Borch(E_4E_{4,1} \otimes \vartheta_{E_8}/\Delta)$ is a $2$-reflective modular form of weight $195$.
\item[(b)] $L=E_8\oplus 2A_1$:  $\Borch(E_{4,1}\otimes E_{4,1}\otimes\vartheta_{E_8}/\Delta)$ is a $2$-reflective modular form of weight $138$.
\item[(c)] $L=2E_8\oplus A_1$:  $\Borch(E_{4,1}\otimes\vartheta_{E_8}\otimes\vartheta_{E_8}/\Delta)$ is a $2$-reflective modular form of weight $75$.
\end{itemize}
Here the Borcherds products are constructed from weakly holomorphic Jacobi forms of weight $0$ (see \cite{G3} or \cite{GN1v1}). The function $\vartheta_{E_8}$ denotes the theta series for the root lattice $E_8$ which is a holomorphic Jacobi form of weight $4$ and index $E_8$ and the function $E_{4,1}$ denotes the Jacobi-Eisenstein series of weight $4$ and index $1$ introduced in \cite{EZ}. The function $\Delta$ denotes the holomorphic cusp form of weight $12$ on $\SL_2(\ZZ)$.
\end{remark}

We now consider the general case, this means that $M$ does not contain two hyperbolic planes.

\begin{theorem}\label{th2'}
Let $M$ be a $2$-reflective lattice of signature $(2,n)$ with $n\geq 15$. Then $n=19$ or $M$ is isomorphic to the unique even unimodular lattice of signature $(2,18)$ or $(2,26)$.
\end{theorem}

\begin{proof}
The proof is similar to the proof of \cite[Proposition 3.1]{Ma1}. By \cite{Ma1} and \cite[Lemma 1.7]{Ma2}, we know\\
\textbf{1)} If $M$ has a $2$-reflective modular form, then any even overlattice $M'$ of $M$ has a $2$-reflective modular form too. \\
\textbf{2)} One can choose an even overlattice $M'$ of $M$ such that $M'$ contains $2U$.\\
We thus complete the proof by Theorem \ref{th2}.
\end{proof}

\subsection{Application: moduli space of K3 surfaces}
As a first application, we consider the family of lattices
\begin{equation}
T_n=U\oplus U \oplus E_8(-1)\oplus E_8(-1)\oplus \langle -2n \rangle
\end{equation}
where $n\in \NN$. The modular variety $\widetilde{\Orth}^{+}(T_n)\backslash\cD(T_n)$ is the moduli space of polarized K3 surfaces of degree $2n$. The subset
\begin{equation}
Discr=\bigcup_{\substack{l \in T_n\\ (l,l)=-2}} l^{\perp}\cap \cD(T_n)
\end{equation}
is the discriminant of this moduli space.  Nikulin \cite{N} asked the question whether the discriminant is equal to the set of zeros of certain automorphic form. This question is equivalent to whether $T_n$ is 2-reflective. Nikulin showed that for any $N$ there exists $n>N$ such that $T_n$ is not 2-reflective.  Gritsenko and Nikulin \cite{GN2} proved that the lattice $T_n$ is not $2$-reflective if $n>\left(\frac{32}{3}+\sqrt{128+8N}\right)^2$, where $N$ is the integer such that any even integer larger than $N$ can be represented as the sum of the squares of eight different positive integers. Finally, Looijenga \cite{L} proved that $T_n$ is not 2-reflective if $n\geq 2$.  As a direct consequence of Theorem \ref{th1} and Theorem \ref{th2}, we present a pretty simple proof of the result.

\begin{theorem}\label{th3}
The lattice $T_n$ is $2$-reflective if and only if $n=1$.
\end{theorem}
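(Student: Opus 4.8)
The plan is to write $T_n$ in the shape $2U\oplus L(-1)$ and then play the two weight constraints of Theorem \ref{th1} and Theorem \ref{th2} against each other. Setting $L=2E_8\oplus\langle 2n\rangle$, one has $\rank(L)=17$, and since $A_L\cong\ZZ/2n\ZZ$ is nontrivial, $L$ is never unimodular. For the easy implication I would simply invoke Remark \ref{r1}: when $n=1$ we have $L=2E_8\oplus A_1$, and $\Borch(E_{4,1}\otimes\vartheta_{E_8}\otimes\vartheta_{E_8}/\Delta)$ is an explicit $2$-reflective form of weight $75$, so $T_1$ is $2$-reflective. It therefore remains to rule out every $n\geq 2$.

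So assume $n\geq 2$ and that $T_n$ carries a $2$-reflective form $F$ of weight $k$ with divisor of the form (\ref{d}). Since $\rank(L)=17\geq 13$ and $L$ is not unimodular, Theorem \ref{th2} forces us into the rank-$17$ case and yields $k=75\beta_0$, with $\beta_0>0$ by Remark \ref{r3.3}(2). The crux is then to pin down the two root counts feeding Theorem \ref{th1}. For the norm-$2$ vectors of $L$: a vector $(a,be)$ with $a\in 2E_8$ satisfies $(a,a)+2nb^2=2$, which for $n\geq 2$ forces $b=0$ and $a$ a root of $2E_8$; hence $R(L)=R(2E_8)$ and $\lvert R(L)\rvert=480$. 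For $R_\mu(L)$: any $s\in L^\vee$ with $2s\in R(L)$ would have $2s=(a,0)$ a root of $2E_8$, so $s=(a/2,0)$; but $2E_8$ is unimodular, so $(2E_8)^\vee=2E_8$, and the half of a root, $a/2$, does not lie in $2E_8$. Thus no such $s$ exists and $R_\mu(L)=\emptyset$ for every $\mu\in\pi_M$, independently of the precise shape of $\pi_M$.

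Feeding $\lvert R(L)\rvert=480$, $\lvert R_\mu(L)\rvert=0$ and $\rank(L)=17$ into (\ref{F5}) collapses the second sum and leaves $k=\beta_0(12+480(\tfrac{12}{17}-\tfrac12))=\tfrac{1884}{17}\beta_0$. Comparing this with $k=75\beta_0=\tfrac{1275}{17}\beta_0$ gives $\tfrac{609}{17}\beta_0=0$, which contradicts $\beta_0>0$. Hence $T_n$ is not $2$-reflective for any $n\geq 2$, and together with the case $n=1$ this proves the theorem. I expect the only delicate point to be the root-count bookkeeping — in particular the observation that the half of an $E_8$-root never lands in the self-dual summand $2E_8$, which is exactly what annihilates $R_\mu(L)$ for $n\geq 2$, whereas for $n=1$ the count $\lvert R_\mu(L)\rvert$ survives through the norm-$2$ vectors $\pm e$ of $\langle 2\rangle$ and restores consistency with the weight $75$.
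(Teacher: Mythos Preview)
Your proof is correct and follows essentially the same approach as the paper: compare the weight formula of Theorem \ref{th1} against the constraint $k=75\beta_0$ from Theorem \ref{th2}, and derive a contradiction for $n\geq 2$. Your argument is in fact slightly more explicit than the paper's, since you justify carefully that the second sum in (\ref{F5}) vanishes by showing $R_\mu(L)=\emptyset$ (the half of an $E_8$-root never lies in the unimodular summand), whereas the paper simply writes $k=\beta_0\bigl[12+480(\tfrac{12}{17}-\tfrac12)\bigr]$ without spelling this out.
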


\begin{proof}
By case (c) of Remark \ref{r1}, $T_1$ is $2$-reflective. By contradiction, we assume that $T_n$ is $2$-reflective with $n\geq 2$. On the one hand, by Theorem \ref{th1}, the weight of the corresponding $2$-reflective modular form is 
$$ k=\beta_0\left[ 12+480\left( \frac{12}{17}-\frac{1}{2} \right) \right]=\beta_0\left(110+\frac{14}{17}\right)> 110\beta_0.$$
On the other hand, Theorem \ref{th2} tells that $k=75\beta_0$, which leads to a contradiction. Hence $T_n$ is not $2$-reflective when $n\geq 2$.
\end{proof}

\section{Non-existence of reflective modular forms}\label{Sec:4}
In this section we introduce reflective modular forms and use similar arguments to show the non-existence of reflective modular forms of large rank.

Let $M$ be an even lattice of signature $(2,n)$, $n\geq 3$. The level of $M$ is the smallest positive integer $N$ such that $N(x,x)\in 2\ZZ$ for all $x\in M^\vee$. We remark that any even lattice of squarefree level is of even rank. A primitive vector $l\in M$ of negative norm is called reflective if the reflection 
\begin{equation}
\sigma_l(x)=x-\frac{2(l,x)}{(l,l)}l,  \quad x\in M
\end{equation}
is in $\Orth^+(M)$.

\begin{definition}
A non-constant holomorphic modular form on $\cD(M)$ is called reflective if the support of its divisor is set-theoretically contained in the union of quadratic divisors $l^{\perp}\cap \cD(M)$ determined by reflective vectors $l$ of $M$. A lattice $M$ is called reflective if it admits a reflective modular form.
\end{definition}

A primitive vector $l\in M$ with $(l,l)=-2d$ is reflective if and only if $\div(l)=2d$ or $d$. Let us fix $\lambda=[l/\div(l)]\in A_M$. Then $l^{\perp}\cap \cD(M)$ is contained in $\cH(\lambda,-1/(4d))$ in the first case, and is contained in 
$$\cH(\lambda,-1/d)-\sum_{2\nu=\lambda}\cH(\nu,-1/(4d))$$
in the second case.

Note that all $(-2)$-vectors are reflective. Therefore, $2$-reflective modular forms are particular class of reflective modular forms.

Reflective modular forms of singular weight on lattices of prime level were completely classified by Scheithauer in \cite{S2,S4}. As another application of our arguments in the previous section, we attempt to classify reflective modular forms on lattices of prime level and large rank.

Let $M=2U\oplus L(-1)$ be an even lattice of prime level $p$ and $F$ be a  reflective modular form of weight $k$ with respect to $\widetilde{\Orth}^+(M)$. By \cite[Section 6]{S4}, the divisor of $F$ can be represented as 
\begin{equation}
\div(F)=\beta_0\cH_0 + \sum_{\gamma\in \pi_{M,p}}\beta_\gamma \cH(\gamma, -1/p),
\end{equation}
where $\pi_{M,p}\subset A_M$ is the subset of elements of norm $-2/p$. By \cite{Br2}, there exists a nearly holomorphic modular form with principal part
\begin{equation*}
\beta_0 q^{-1} \textbf{e}_0+ \sum_{\gamma \in \pi_{M,p}} \beta_\gamma q^{-1/p}\textbf{e}_\gamma.
\end{equation*}
Then there exists a weakly holomorphic Jacobi form of weight 0 with singular Fourier coefficients 
\begin{equation}\label{sr}
sing(\psi_L)= \beta_0 \sum_{r\in L}q^{(r,r)/2-1}\zeta^r+ \sum_{\gamma \in \pi_{M,p}} \beta_\gamma\sum_{s\in L +\gamma}q^{(s,s)/2-1/p}\zeta^s.
\end{equation}
Then the $q^0$-term of $\psi_L$ can be written as
\begin{equation*}
\psi_L(\tau,\mathfrak{z})= \beta_0 q^{-1}+\beta_0 \sum_{r\in R(L)}\zeta^r +2k+\sum_{\gamma \in \pi_{M,p}} \beta_\gamma  \sum_{s\in C_\gamma (L)}\zeta^s + O(q),
\end{equation*}
where 
\begin{equation}
C_\gamma (L)=\{ s\in L^\vee: (s,s)=2/p, s-\gamma \in L\}.
\end{equation}
Thus, we get a formula related to the weight of the above reflective modular form
\begin{equation}\label{rwf}
\begin{split}
k=&\beta_0\left[ 12+ \lvert R(L) \rvert \left(\frac{12}{\rank(L)}-\frac{1}{2} \right) \right]\\ &+\left(\frac{12}{p\cdot \rank(L)}-\frac{1}{2} \right)\sum_{\gamma \in \pi_{M,p}} \beta_\gamma\lvert  C_\gamma (L)\rvert.
\end{split}
\end{equation}
It is possible to find a similar formula for the weight of reflective modular forms for general lattices.

\begin{remark}
Let $M=2U\oplus L(-1)$ be an even lattice of prime level $p$ and $F$ be a reflective modular form of weight $k$ for $M$. From (\ref{rwf}), when $\rank(L)=12$ and $p=2$, then $k=\beta_0(12+\frac{1}{2}\lvert R(L) \rvert)$ so the function $F$ is not of singular weight. When $\rank(L)=8$ and $p=3$, $k=\beta_0(12+\lvert R(L) \rvert)$ and $F$ is not of singular weight.
\end{remark}

By \cite[Proposition 3.2]{Ma1}, when a reflective modular form $F$ exists,  we have that either $\rank(L)\leq 23$ or $L$ is a unimodular lattice of rank $24$. We next give a finer classification of reflective modular forms on lattices of prime level.

\begin{theorem}\label{prop}
Let $M=2U\oplus L(-1)$ be an even lattice of prime level $p$. If $M$ admits a reflective modular form of weight $k$ for $\widetilde{\Orth}^+(M)$, then we have
\begin{enumerate}
\item when $p=2$,  either $\rank(L)\leq 16$ or $\rank(L)=20$ and $k=24\beta_0$.
\item when $p=3$,  either $\rank(L)\leq 12$ or $\rank(L)=18$ and $k=48\beta_0$.
\item when $p\geq 5$, $\rank(L)\leq 8+ 24/(p+1)$. 
\end{enumerate}
\end{theorem}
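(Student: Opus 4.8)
The plan is to transport the argument of Theorem~\ref{th2} to the prime-level setting, the only structural change being that the second singular layer now carries hyperbolic norm $-2/p$ in place of $-1/2$. Write the weight-$0$ index-$L$ form as $\psi_L = S_1 + d + S_2 + \cdots$, where $S_1$ is the norm-$(-2)$ layer of multiplicity $\beta_0$, $S_2$ is the norm-$(-2/p)$ layer read off from (\ref{sr}), and $d = 2k$. Applying the normalized weight-raising operators exactly as before yields $f_2, f_4, f_6$ whose $S_1$-coefficient is normalized to $1$; the constant-term recursions for the $d_j$ are identical to those in Theorem~\ref{th2}, while the $S_2$-coefficients now satisfy
$$c_{j+1} = c_j\,\frac{n_0 - 4j - 24/p}{\,n_0 - 4j - 24\,},$$
with $n_0 = \rank(L)$ and the number $24/p$ playing the role of the $6$ of the $2$-reflective case. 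The same combination $\varphi_6 = (c_1-c_3)E_6\psi_L + (c_3-1)E_4 f_2 + (1-c_1)f_6$ annihilates both $S_1$ and $S_2$, so $\varphi_6 \in J_{6,L}$ is holomorphic with constant term $u = u(d,\beta_0,n_0,p)$, and the singular-weight bound $n_0/2$ forces $\varphi_6 = 0$, hence $u = 0$, as soon as $n_0 \ge 13$. We may assume $n_0 \le 23$ by the reduction recalled before the statement.

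As in Theorem~\ref{th2}, the existence of $F$ forces $d = 2k \ge n_0$ and $\beta_0 > 0$; since $u$ is linear in $(d,\beta_0)$, evaluating the identity $u = 0$ under these inequalities excludes a band of intermediate ranks. Combined with the $\eta$-step below, this is what yields the thresholds $n_0 \le 16$ for $p = 2$ and $n_0 \le 13$ for $p = 3$.

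For the top ranks when $p = 2$ or $p = 3$ I would reuse the $\eta$-trick, which is available precisely because $p \mid 24$: here $\eta^{24/p}$ is a genuine form of weight $12/p$ and $q$-order $1/p$. Putting $g = E_4\psi_L - f_4$ (weight $4$, with $S_1$ killed and singular part $(1-c_2)S_2$), the product $\eta^{24/p} g$ lifts the norm of $S_2$ from $-2/p$ to $0$ and is therefore a holomorphic Jacobi form with character of weight $4 + 12/p$. Vanishing below singular weight forces $g = 0$, so either $S_2 = 0$ --- in which case $F$ is $2$-reflective and Theorem~\ref{th2} excludes these ranks --- or $c_2 = 1$. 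Solving $c_2 = 1$ gives exactly $n_0 = 20$ for $p = 2$ and $n_0 = 18$ for $p = 3$; substituting into the constant-term equation $(d+240\beta_0) - d_2 = 0$ of $g$ then yields $k = 24\beta_0$ and $k = 48\beta_0$, respectively.

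The hard part is $p \ge 5$, where $p \nmid 24$ and no integral power of $\eta$ cancels the $-2/p$ singularity, so the clean ``raise to holomorphic weight and vanish'' step is no longer available. The weight-$6$ relation $u = 0$ only disposes of $n_0 \ge 13$, whereas the asserted bound $8 + 24/(p+1) \le 12$ also requires excluding certain ranks $n_0 \le 12$ once $p \ge 7$. To reach these I would analyze $\varphi_6$ at the boundary rank $n_0 = 12$, where it sits at singular weight and is thus a theta combination supported on norm-$0$ vectors (the mechanism used in the unimodular step of Theorem~\ref{th}); comparing its constant term with the weight formula (\ref{rwf}) and exploiting the non-negativity of $\beta_0$, $\beta_\gamma$, $|R(L)|$ and $|C_\gamma(L)|$ should deliver the sharpened inequality, with the denominator $p+1$ emerging from the count of norm-$2/p$ classes in the level-$p$ discriminant form. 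I expect this singular-weight/theta analysis to be the principal obstacle, both because it supplants the clean vanishing argument and because it is the step in which the exact constant $24/(p+1)$ must be extracted.
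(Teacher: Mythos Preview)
Your setup through $\varphi_6$ and $g = E_4\psi_L - f_4$ matches the paper, and solving $c_2 = 1$ to isolate $n_0 = 20$ (for $p=2$) and $n_0 = 18$ (for $p=3$), together with the constant-term equation for the weights, is correct. The substantive divergence is in how you bound $n_0$ when $g \neq 0$.

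For $p \ge 5$ you propose a singular-weight/theta analysis of $\varphi_6$ at $n_0 = 12$, hoping the constant $24/(p+1)$ will fall out of a count of norm-$2/p$ classes. This is not what the paper does, and I do not see how to make it work: at $n_0 = 12$ the form $\varphi_6$ sits exactly at singular weight and need not vanish, so you get a theta identity rather than a contradiction, and nothing in that identity singles out the index $p+1$. The paper's device is different and uniform in $p$: pass from $g$ to the associated vector-valued modular form $F = \sum_\gamma F_\gamma \mathbf{e}_\gamma$ of weight $4 - n_0/2$, observe that $F_0 \neq 0$ is a nearly holomorphic scalar form for $\Gamma_0(p)$, and apply Riemann--Roch (valence formula) to $F_0$. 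Since the pole orders at the two cusps are bounded by $1$ and $1/p$, one gets
\[
-1 \ \le\ p\,\nu_0(F_0) + \nu_\infty(F_0)\ \le\ \Bigl(4 - \tfrac{n_0}{2}\Bigr)\frac{p+1}{12},
\]
whence $n_0 \le 8 + 24/(p+1)$. The factor $p+1$ is the index $[\SL_2(\ZZ):\Gamma_0(p)]$, which is why it is invisible to the purely Jacobi-side manipulations you attempt.

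This Riemann--Roch step also handles $p = 2,3$: it gives $n_0 \le 16$ for $p=2$ and $n_0 \le 14$ for $p=3$ directly, so your $\eta^{24/p}$-trick is not needed. (Your $\eta$-step only forces $g=0$ once $4 + 12/p < n_0/2$, i.e.\ $n_0 \ge 21$ for $p=2$ and $n_0 \ge 17$ for $p=3$, leaving the bands $17$--$19$ and $14$--$16$ to be cleared by $u=0$; you assert this works but do not check it.) In the paper the weight-$6$ relation $\varphi_6$ is used only once, to exclude the residual case $n_0 = 14$, $p = 3$ that Riemann--Roch leaves over.
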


\begin{proof}
Similar to the proof of Theorem \ref{th2}, there exists a weakly holomorphic Jacobi form $\phi$ of weight $0$ and index $L$ with singular Fourier coefficients of the form (\ref{sr}). Assume that $\rank(L)\leq 23$. We write $\phi=S_1 + d + S_2 + \cdots$, where $S_1$ and $S_{2}$ are the first and second terms in (\ref{sr}), respectively, and $d=2k$. We can construct ($n_0=\rank(L)$ and $a=24/p$)
\begin{align*}
f_2&=\frac{24}{n_0-24}H_0(\phi)=S_1+ d_1 +c_1 S_2 +\cdots \in J_{2,L}^{w.h.},\\
f_4&=\frac{24}{n_0-28}H_2(f_2)=S_1+ d_2 +c_2 S_2 +\cdots \in J_{4,L}^{w.h.},\\
f_6&=\frac{24}{n_0-32}H_4(f_4)=S_1+ d_3 +c_3 S_2 +\cdots \in J_{6,L}^{w.h.},
\end{align*}
where
\begin{align*}
&d_1=\frac{n_0(d-24\beta_0)}{n_0-24},& &c_1=\frac{n_0-a}{n_0-24},&\\
&d_2=\frac{(n_0-4)(d_1-24\beta_0)}{n_0-28},& &c_2=c_1\frac{n_0-a-4}{n_0-28},&\\
&d_3=\frac{(n_0-8)(d_2-24\beta_0)}{n_0-32},& &c_3=c_2\frac{n_0-a-8}{n_0-32}.&
\end{align*}
We can check that the function
$$\varphi_6=(c_1-c_3)E_6\phi+(c_3-1)E_4f_2+(1-c_1)f_6=u+O(q) \in J_{6,L}$$
has no term of the form $S_1$ or $S_2$ and then it has no singular Fourier coefficient, so it
is a holomorphic Jacobi form of weight 6, where
$$u=(d-504\beta_0)(c_1-c_3)+(d_1+240\beta_0)(c_3-1)+d_3(1-c_1).$$

We also construct a weakly holomorphic Jacobi form of weight 4
$$ 
g=E_4\phi - f_4 =(d+240\beta_0)-d_2 + (1-c_2)S_2+\cdots \in J_{4,L}^{w.h.}.
$$
By Theorem \ref{th}, we have $S_2\neq 0$ when $n_0> 8$.
By direct calculations, we get
\begin{equation}\label{gg}
c_2=1 \iff \rank(L)= 14+\frac{12}{p}.
\end{equation}
Therefore, when $p=2$, $c_2=1$ if and only if $n_0=20$, when $p=3$, $c_2=1$ if and only if $n_0=18$, when $p>3$, $c_2-1\neq 0$. We thus obtain
\begin{itemize}
\item when $p=2$, if $g=0$ then $n_0=20$ and $d=48\beta_0$;
\item when $p=3$,  if $g=0$ then $n_0=18$ and $d=96\beta_0$;
\item when $p\geq 5$, $g\neq 0$.
\end{itemize}

Suppose $g\neq 0$ and $n_0>8$.  Then $c_2\neq 1$, otherwise $g$ will be a holomorphic Jacobi form of weight $4$, which contradicts the singular weight.
The weakly holomorphic Jacobi form $g$ corresponds to a nearly holomorphic vector-valued modular form $$F=\sum_{\gamma\in A_M}F_\gamma\textbf{e}_\gamma$$
of weight $4-n_0/2$. Note that $F_0(\tau)$ has no term $q^n$ with negative $n$ and for any nonzero $\gamma\in A_M $, the possible term $q^n$ with negative $n$ of $F_\gamma$ is $q^{-1/p}$.
We know from \cite[Proposition 5.3]{Sch15} that $F_0 \neq 0$ because the function $\sum_{\sigma\in \Orth(A_M)} \sigma \cdot F$ is invariant under the orthogonal group $\Orth(A_M)$ of the discriminant group $A_M$ and it is not zero. In addition,   $F_0$ is a nearly holomorphic modular form of weight $4-n_0/2$ with respect to $\Gamma_0(p)$ and its expansion at the cusp $0$ is a linear combination of $F_\gamma$.  As in the proof of \cite[Proposition 6.1]{S4}, the Riemann-Roch theorem applied to $F_0$ gives
$$ -1 \leq p\nu_0(F_0)+\nu_{\infty}(F_0)\leq \left(4-\frac{n_0}{2}\right) \frac{p+1}{12}.$$
This implies 
$$ n_0\leq 8+ \frac{24}{p+1}.$$

It remains to prove that $M$ is not reflective if $n_0=14$ and $p=3$. But in this case, $u\neq 0$ and then $\varphi_6\neq 0$, which gives a contradiction. The proof is completed.
\end{proof}

Note that when $\rank(L)=16$ and $p=2$, we have $u\equiv 0 $. Therefore, our argument cannot determine the weight of the corresponding reflective modular form.

\begin{remark}
\noindent
\begin{enumerate}
\item The rank of an even positive-definite lattice of level $2$ is known to be divisible by $4$ (see \cite{SV}). Thus, by Theorem \ref{prop}, if $2U\oplus L(-1)$ is a reflective lattice of level $2$, then $\rank(L)$ can only be 4, 8, 12, 16 or 20.  We note that there exist reflective lattices of such ranks.  When $L=D_4$, $2D_4$, $E_8\oplus D_4$, $E_8\oplus 2D_4$ or $2E_8\oplus D_4$, the lattice $2U\oplus L(-1)$ admits a reflective modular form.

\item If $L$ is an even positive-definite lattice of level $3$ and of rank $n$ with determinant $\det( L)=\lvert L^\vee/L\rvert=3^r$, then either $r\in \{ 0,n\}$ and $8\lvert n$, or $0< r < n$ and $2r\equiv n\mod 4$. Theorem \ref{prop} says that $2U\oplus L(-1)$ is a reflective lattice of level $3$ only if $\rank(L)=2,4,6,8,10,12$ or 18.
Reflective lattices of such ranks exist.  When $L=A_2$, $2A_2$, $3A_2$, $4A_2$, $E_8\oplus A_2$, $E_8\oplus 2A_2$ or $2E_8\oplus A_2$, the lattice $2U\oplus L(-1)$ admits a reflective modular form. 
\end{enumerate}
\end{remark}

By the above theorem and the weight formula (\ref{rwf}), it is easy to prove the following criterions.

\begin{corollary}
Suppose that the even lattice $M=2U\oplus L(-1)$ of prime level $p$ is reflective. 
\begin{enumerate}
\item When $\rank(L)=20$ and $p=2$, we have $\lvert R(L) \rvert \geq 120$. 
\item When $\rank(L)=18$ and $p=3$, we have $\lvert R(L) \rvert \geq 216$. 
\end{enumerate}
\end{corollary}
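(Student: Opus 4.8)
The plan is to feed the exact weight values supplied by Theorem \ref{prop} into the weight formula (\ref{rwf}) and then exploit the non-negativity of the multiplicities $\beta_\gamma$. In both cases Theorem \ref{prop} pins down $k$ as a fixed multiple of $\beta_0$: one has $k=24\beta_0$ when $\rank(L)=20$, $p=2$, and $k=48\beta_0$ when $\rank(L)=18$, $p=3$. Since $F$ is non-constant and holomorphic its weight is positive, so $\beta_0\geq 1$ in either situation; this is what guarantees that dividing through by $\beta_0$ at the end is legitimate.

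First I would evaluate the two coefficients appearing in (\ref{rwf}) at the relevant values. For $\rank(L)=20$, $p=2$ one gets $\frac{12}{\rank(L)}-\frac12=\frac{1}{10}$ and $\frac{12}{p\cdot\rank(L)}-\frac12=-\frac15$, while for $\rank(L)=18$, $p=3$ one gets $\frac{12}{\rank(L)}-\frac12=\frac16$ and $\frac{12}{p\cdot\rank(L)}-\frac12=-\frac{5}{18}$. Substituting these, together with the value of $k$, into (\ref{rwf}) turns the weight formula into a single linear relation among $\beta_0$, $|R(L)|$ and the sum $\sum_{\gamma\in\pi_{M,p}}\beta_\gamma|C_\gamma(L)|$.

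The crucial point --- and what makes the argument run in one line rather than requiring any control of the sets $C_\gamma(L)$ --- is that in both cases the coefficient $\frac{12}{p\cdot\rank(L)}-\frac12$ multiplying the sum is \emph{negative}. Since every $\beta_\gamma\geq 0$ and every $|C_\gamma(L)|\geq 0$, that entire term is $\leq 0$, so discarding it yields a one-sided inequality in exactly the direction we need, namely $k\leq \beta_0[12+|R(L)|(\frac{12}{\rank(L)}-\frac12)]$. Rearranging and dividing by $\beta_0>0$ then gives $12\leq\frac{1}{10}|R(L)|$ in the first case and $36\leq\frac16|R(L)|$ in the second, that is, $|R(L)|\geq 120$ and $|R(L)|\geq 216$ respectively. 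There is no genuine obstacle here beyond bookkeeping the sign of the discarded term; the whole content of the statement is that at these two exceptional ranks the level-$p$ contribution to the weight is subtractive, forcing the root system of $L$ alone to account for the entire weight $k$.
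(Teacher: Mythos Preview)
Your proposal is correct and is exactly the argument the paper has in mind: the paper states only that the corollary follows from Theorem \ref{prop} and the weight formula (\ref{rwf}), and you have filled in precisely those details, including the key sign observation that $\tfrac{12}{p\cdot\rank(L)}-\tfrac12<0$ at the two exceptional values, together with $\beta_0>0$ (forced by $k>0$ and $k=24\beta_0$ resp.\ $k=48\beta_0$). Your arithmetic checks out in both cases.
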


The above result can be used to decide whether a given lattice is reflective or not. For instance, we see at once that $2U\oplus L(-1)$ is not reflective when $L=E_8(2)\oplus 3D_4$ or $2E_6\oplus 3A_2$.

We next extend the above classification results to the general case. The following lemma introduced in \cite[Corollary 3.2]{Ma2} is very useful for our purpose.

\begin{lemma}\label{lemr}
Let $M$ be a lattice of signature $(2, n)$ with $n \geq 11$. There exists
a lattice $M_1$ on $M\otimes \QQ$ such that $\Orth^+(M)\subset \Orth^+(M_1)$ and that $M_1$ is a scaling of an even lattice containing $2U$.
\end{lemma}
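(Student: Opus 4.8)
The plan is to reduce the statement to a lattice–theoretic splitting criterion of Nikulin, the genuinely new content being the construction of the auxiliary lattice $M_1$ in a way that is automatically preserved by $\Orth^+(M)$. The first observation is that the decoration ``$+$'', and hence the group $\Orth^+(M)$ viewed inside $\Orth(M\otimes\QQ)$, depends only on the rational quadratic space $M\otimes\QQ$ up to multiplication of the form by a positive scalar; it is insensitive to the choice of an integral lattice. Consequently it suffices to produce a single lattice $\Lambda\subset M\otimes\QQ$ that is stable under $\Orth(M)$, together with a positive rational $a$, such that $(\Lambda,\tfrac1a(\cdot,\cdot))$ is an even lattice containing $2U$. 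Setting $M_1=(\Lambda,(\cdot,\cdot))$ and $M_2=(\Lambda,\tfrac1a(\cdot,\cdot))$ then yields $\Orth^+(M)\subset\Orth^+(\Lambda)=\Orth^+(M_1)=\Orth^+(M_2)$, since stability of $\Lambda$ under $\Orth(M)$ gives the first inclusion and a positive scaling leaves $\Orth^+$ unchanged.

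To secure the $\Orth(M)$-stability for free, I would assemble $\Lambda$ from purely local data. For each prime $p$ I would choose a $\ZZ_p$-lattice $L_p\subset M\otimes\QQ_p$ that is invariant under the \emph{full} local orthogonal group $\Orth(M\otimes\ZZ_p)$ and equals $M\otimes\ZZ_p$ for all but finitely many $p$, and set $\Lambda=\{x\in M\otimes\QQ: x\in L_p \text{ for all } p\}$. Because every $g\in\Orth(M)$ lies in $\Orth(M\otimes\ZZ_p)$ for every $p$, it preserves each $L_p$ and hence $\Lambda$; this sidesteps any appeal to strong approximation. The construction then has two independent knobs: the local surgeries $L_p$, namely passage to suitable invariant sub- or over-lattices, which are governed by the $\Orth$-invariant subgroups of the discriminant form; and the single global scalar $a$, which one uses to normalise the overall $p$-adic scales and to force evenness (scaling the dual lattice by the level of $M$ already produces an even lattice, for instance). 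The aim of this bookkeeping is to make the discriminant group $A_{M_2}$ as small as possible, precisely to control its minimal number of generators $\ell(A_{M_2})$.

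With such an $M_2$ in hand the final step is Nikulin's splitting criterion: an even lattice of signature $(2,n)$ with $n\ge 2$ and $\rank\ge \ell(A)+4$ splits off two orthogonal hyperbolic planes, hence contains $2U$. Here $\rank(M_2)=n+2$, so the criterion applies as soon as $\ell(A_{M_2})\le n-2$, and this is exactly the inequality that the hypothesis $n\ge 11$ is meant to guarantee once $\ell(A_{M_2})$ has been driven below the relevant universal threshold by the local normalisation. The main obstacle is precisely this uniform control of $\ell(A_{M_2})$: the local surgeries and the single global scaling must be chosen compatibly across all primes while simultaneously preserving evenness, and the place $p=2$ — where the even/odd dichotomy and the finer structure of the discriminant quadratic form intervene — is the delicate case. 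Checking that the resulting bound on $\ell(A_{M_2})$ is small enough that $n\ge 11$ suffices is where the real work lies; the reduction steps around it are formal.
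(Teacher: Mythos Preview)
The paper does not give its own proof of this lemma: it is imported verbatim from \cite[Corollary~3.2]{Ma2} and used as a black box, so there is no in-paper argument to compare against line by line.

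Your strategy is sound and is in the spirit of how such results are established: pass to an $\Orth(M)$-stable lattice assembled from canonical local choices, rescale globally to force evenness, and then invoke Nikulin's splitting criterion. The two reduction steps you isolate --- that $\Orth^+$ depends only on the rational quadratic space up to positive scaling, and that a lattice glued from $\Orth(M\otimes\ZZ_p)$-invariant local pieces $L_p$ is automatically $\Orth(M)$-stable --- are correct and set up exactly the right framework.

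That said, what you have written is not yet a proof, and you acknowledge this yourself. You have not specified \emph{which} local lattices $L_p$ to take, nor verified that the resulting discriminant group satisfies $\ell(A_{M_2})\le n-2$; this inequality is the entire substance of the lemma, not a routine detail. In Ma's treatment the local normalisation is the passage to a \emph{stable} lattice --- one for which, after a suitable scaling, the $p$-rank of the discriminant group is bounded by roughly half the total rank at every prime --- and it is this explicit bound, together with the extra bookkeeping at $p=2$, that yields the threshold $n\ge 11$. Until you carry out an analogous explicit computation of $\ell(A_{M_2})$, the proposal remains a (correct) reduction scheme with the key estimate left open.
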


We remark that the above lattice $M_1$ is usually not an overlattice of $M$ and it is constructed as a sublattice of an overlattice of $M$. We next use the above lemma to extend Ma's result \cite[Proposition 3.2]{Ma1} to the general case.

\begin{theorem}\label{thma}
\noindent
\begin{enumerate}
\item There is no reflective lattice of signature $(2,n)$ with $n>26$.
\item Let $M$ be an even lattice of signature $(2,26)$. If it admits a reflective modular form which can be constructed as a Borcherds product, then it is isomorphic to the unique even unimodular lattice of signature $(2,26)$.
\end{enumerate}
\end{theorem}

\begin{proof}
We first prove the statement (1). By contradiction, assume that there is a reflective lattice $M$ of signature $(2,n)$ with $n>26$ and we denote the corresponding reflective modular form by $F$. By Lemma \ref{lemr}, 
there exists a lattice $M_1$ on $M\otimes \QQ$ such that $\Orth^+(M)\subset \Orth^+(M_1)$ and that $M_1$ is a scaling of an even lattice $M_2$ containing $2U$. Here, we have a natural isomorphism 
$$
\cD(M) \cong  \cD(M_1) \cong  \cD(M_2),
$$
where the first comes from the equality $M\otimes \QQ=M_1\otimes \QQ$ and the second from the identification $M_1=M_2$ as $\ZZ$-modules. Moreover, the inclusion $\Orth^+(M)\subset \Orth^+(M_1) \cong \Orth^+(M_2)$ is compatible with this isomorphism and the isomorphism preserves the reflective divisors. Thus, $F$ is a reflective modular form for $M_1$ and then also a reflective modular form for $M_2$, which contradicts \cite[Proposition 3.2]{Ma1}. 

We next prove the statement (2). The proof is similar to  \cite[Proposition 3.2]{Ma1}. Assume that the corresponding reflective modular form $F$ is a Borcherds product of a nearly holomorphic modular form $f$. Then $\Delta f$ is a holomorphic modular form of weight $0$ and hence must be an $\Mp_2(\ZZ)$-invariant vector in $\CC[A_M]$. Then we get $M=II_{2,26}$ because $\Delta f$ does not transform correctly under the matrix $S$ when $\lvert A_M \rvert \neq 1$.  This completes the proof.
\end{proof}

\begin{remark}
We do not know if there is any other reflective lattice of signature $(2,26)$ except the scalings of $II_{2,26}$. By the second statement in Theorem \ref{thma} and \cite{Br1}, such reflective lattice is not of the form $U\oplus U(m)\oplus L(-1)$. This question is related to the general question if all reflective modular forms come from Borcherds products.
\end{remark}

We now extend Theorem \ref{prop} to the general case.

\begin{theorem}\label{coro}
Let $M$ be a reflective lattice of signature $(2,n)$ and of prime level $p$.
\begin{enumerate}
\item If $p=2$ and $n>18$ and $n\neq 22$, then $M$ is isomorphic to $II_{2,26}(2)$.
\item If $p=3$ and $n>14$ and $n\neq 20$, then $M$ is isomorphic to $II_{2,18}(3)$ or $II_{2,26}(3)$.
\item If $p>3$ and $n> 10+ 24/(p+1)$, then $M$ is isomorphic to $II_{2,18}(p)$ or $II_{2,26}(p)$.
\end{enumerate}
\end{theorem}

\begin{proof}
Let $F$ be a reflective modular form for $M$. In this case, we have $n\geq 11$. Assume that the determinant of $M$ is $p^a$, where $1\leq a\leq n+2$ is an integer. Since $M$ is of prime level $p$, the discriminant group of $M$ is isomorphic to $(\ZZ/p\ZZ)^a$ and the minimum number of generators of this group is $l(M)=a$. 

If $n>a+2$, by \cite[Corollary 1.10.2]{Nik80}, there exists an even lattice $L$ of sinature $(0,n-2)$ having discriminant form $M^\vee/M$ because $n-2>l(M)$. Then the lattice $2U\oplus L$ is an even lattice of signatute $(2,n)$ with the same discriminant form as $M$. From \cite[Corollary 1.13.3]{Nik80}, it follows that $M$ and $2U\oplus L$ are isomorphic. Thus, the function $F$ is also a reflective modular form for $2U\oplus L$. We then prove this theorem by Theorem \ref{prop}.

If $n\leq a+2$, then $a\geq 9$ because $n\geq 11$. Since $M$ is of prime level $p$, the lattice $M^\vee(p)$, which is a scaling of the dual lattice of $M$, is even and of determinant $p^{2+n-a}$. Moreover, if $M^\vee(p)$ is not unimodular, then it is of level $p$. In view of the natural isomorphism 
$$\Orth^+(M)\cong \Orth^+(M^\vee)\cong \Orth^+(M^\vee(p)),$$
the function $F$ is a reflective modular form for $M^\vee(p)$. Since $n>(2+n-a)+2$, we can prove this case as the previous case. If $M^\vee(p)$ is  unimodular, then $M=(M^\vee(p))^\vee(p)$ is a scaling of an even unimodular lattice. Thus, the proof is completed.
\end{proof}

\begin{remark}
If a reflective modular form can be constructed as a Borcherds product of a vector-valued modular form invariant under $\Orth(A_M)$, it is called symmetric, otherwise it is called non-symmetric.
\cite[Theorem 6.5]{S4} gives bounds on the signature for non-symmetric reflective modular forms. But the bounds do not hold in the symmetric case. However, the above result gives bounds in the symmetric case.
\end{remark}

\begin{theorem}\label{prop01}
There is no reflective lattice of signature $(2,n)$ with $23\leq n\leq 25$.
\end{theorem}

\begin{proof}
Let $M$ be an even lattice of signature $(2,n)$ with $23\leq n\leq 25$.
Firstly, we assume that $M$ contains $2U$. By contradiction, assume that $M$ is reflective. Then, there exists a weakly holomorphic Jacobi form $\phi$ of weight 0 and we can construct a weakly holomorphic Jacobi form $f_4$ of weight 4 from $\phi$ as in the proof of Theorem \ref{prop}. We define $g=E_4\phi - f_4$. Then $g$ has no singular Fourier coefficient of hyperbolic norm $-2$. From (\ref{gg}), we get $g\neq 0$. Moreover, the minimum possible hyperbolic norm of the Fourier coefficients of $g$ is $-1$. Then $\eta^{12}g$ is a holomorphic Jacobi form of weight $10$ with character, which leads to a contradiction due to the singular weight. The general case can be proved as the proof of Theorem \ref{thma}.
\end{proof}

\begin{remark}
When $1\leq \rank(L) \leq 20$ and $\rank(L)\neq 15$ or $19$, there exist reflective lattices $2U\oplus L(-1)$, such as $A_n$ for $1\leq n \leq 7$, $D_8$, $E_8\oplus A_1$, $E_8\oplus A_2$, $E_8\oplus A_2\oplus A_1$, $E_8\oplus D_4$, $E_8\oplus D_4\oplus A_1$, $E_8\oplus D_4\oplus A_2$, $E_8\oplus 2D_4$, $2E_8\oplus A_1$, $2E_8\oplus A_2$, $2E_8\oplus D_4$. But we do not know if there exists reflective lattice $2U\oplus L(-1)$ with $\rank(L)=15$ or $19$.
\end{remark}

\begin{questions} Here, we would like to formulate some interesting questions related to our work:
\begin{enumerate}
\item Are there 2-reflective lattices of signature $(2,13)$ or $(2,14)$?
\item Are there reflective lattices of signature $(2,17)$ or $(2,21)$?
\item Let $M=2U\oplus L(-1)$ be a 2-reflective lattice of signature $(2,19)$. Is $L$ equal to $2E_8\oplus A_1$ up to isomorphism? 
\item Classify the following interesting lattices:
\begin{itemize}
\item 2-reflective lattices of signature $(2,19)$,
\item reflective lattices of signature $(2,22)$ and of level 2,
\item reflective lattices of signature $(2,20)$ and of level 3.
\end{itemize}  
\end{enumerate}
\end{questions}

\section{Another application: dd-modular forms}\label{Sec:5}
Our arguments in the previous section are also applicable to some other questions. In this section we use similar arguments to classify the modular forms with the simplest reflective divisors, i.e. the dd-modular forms defined in \cite{CG1}.

Let $nA_1$ denote the lattice of $n$ copies of $A_1=\langle 2 \rangle$, $n\in \NN$. Let $\{e_1, ...,e_n\}$ denote the standard basis of $\RR^n$ with standard scalar product $(\cdot,\cdot)$. We choose the following model for the lattice $nA_1(m)$:
\begin{equation}
(\langle e_1,...,e_n \rangle_\ZZ,\; 2m(\cdot ,\cdot) )
\end{equation}
and set $\mathfrak{z}_n=\sum_{i=1}^n z_ie_i \in nA_1 \otimes \CC$, $\zeta_i=e^{2\pi i z_i}$, for $1\leq i \leq n$. We define
\begin{equation}
\Gamma_{n,m}=\Orth^{+}(2U\oplus nA_1(-m))
\end{equation}
and the definition of dd-modular forms is as follows.
\begin{definition}
A holomorphic modular form with respect to $\Gamma_{n,m}$ is called a dd-modular form if it vanishes exactly along the $\Gamma_{n,m}$-orbit of the diagonal $\{z_n=0\}$. The $\Gamma_{n,m}$-orbit of the diagonal $\{z_n=0\}$, denoted by $\Gamma_{n,m}\{z_n=0\}$, is called the diagonal divisor.
\end{definition}

It is well known that the Igusa form $\Delta_5$ which is the product of the ten even theta constants vanishes precisely along the diagonal divisor $\{z=0\}$. Therefore, the dd-modular form is a natural generalization of $\Delta_5$.  Gritsenko and Hulek \cite{GH} proved that the dd-modular form exists for the lattice $A_1(m)$ if and only if $1\leq m \leq 4$.  Cl\'ery and Gritsenko \cite{CG1} developed the arguments in \cite{GH} and gave the full classification of the dd-modular forms with respect to the Hecke subgroups of the Siegel paramodular groups. But their approach is hard to generalize to higher dimensions. Since dd-modular forms are crucial in determining the structure of the fixed space of modular forms and have applications in physics, as an important application of our arguments, we prove the following classification results for all dd-modular forms for lattices of the shape $nA_1$.

\begin{theorem}\label{th4}
The dd-modular form exists if and only if the pair $(n,m)$ takes one of the eight values
\begin{align*}
&(1,1),& &(1,2),& &(1,3),& &(1,4),& &(2,1),& &(2,2),& &(3,1),& &(4,1).&
\end{align*}
\end{theorem}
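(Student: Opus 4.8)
The plan is to push the weight-$0$ Jacobi forms technology of Sections 3--4 through for the index lattice $L=nA_1(m)$. Set $M=2U\oplus nA_1(-m)$ and $e_i^\vee=e_i/(2m)$, so $(e_i^\vee,e_i^\vee)=1/(2m)$ and the diagonal $\{z_n=0\}$ is the quadratic divisor $(e_n^\vee)^\perp$, cut out by the reflective vector $e_n$ of norm $-2m$ with $\div(e_n)=2m$. First I would observe that a dd-modular form is a reflective modular form whose divisor is the single $\Gamma_{n,m}$-orbit of $(e_n^\vee)^\perp$; by \cite{Br2} it is therefore $\Borch(\phi)$ for a weakly holomorphic Jacobi form $\phi$ of weight $0$ and index $nA_1(m)$. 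The key step is to translate ``vanishes exactly along the diagonal'' into Fourier data: $\phi$ has no $q^{-1}$-term, every coefficient $c(N,l)$ with $2N-(l,l)<0$ vanishes unless $l$ lies on a coordinate line $\QQ e_i^\vee$, and the surviving coefficients $a_j:=c(0,je_i^\vee)$ ($1\le j\le m$, equal for all $i$ by the permutation action in $\Orth(nA_1(m))$) are non-negative. In other words $\phi$ is holomorphic as a Jacobi form off the diagonal, with singular part $\sum_{i=1}^n\sum_{j=1}^m a_j(\zeta^{je_i^\vee}+\zeta^{-je_i^\vee})$ at $q^0$.

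Feeding this into Lemma \ref{L2} (which is exactly the hypothesis of a weight-$0$ form with prescribed $q^{-1}$- and $q^0$-terms) gives the weight
\begin{equation*}
k=\sum_{j=1}^{m}a_j\left(\frac{6j^2}{m}-n\right),
\end{equation*}
the analogue of Theorem \ref{th1} and of formula (\ref{rwf}); for $(n,m)=(1,1)$ and $a_1=1$ it returns $k=5$, the weight of $\Delta_5$. I would then show that the eight pairs are precisely those with $nm\le 4$. The existence direction I would settle by exhibiting explicit forms: $\Delta_5$ for $(1,1)$, the Gritsenko--Hulek forms \cite{GH} for $(1,2),(1,3),(1,4)$, and Borcherds lifts of concrete weight-$0$ weak Jacobi forms of index $nA_1(m)$ for $(2,1),(2,2),(3,1),(4,1)$.

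For the non-existence direction the weight formula is not by itself enough, since the top term $a_m(6m-n)$ can keep $k$ positive for large $m$; the work is to exploit holomorphy off the diagonal. For this I would reuse the construction of Theorems \ref{th} and \ref{th2}: Lemma \ref{L1} yields $f_2=\tfrac{24}{n-24}H_0(\phi)$ and $f_4=\tfrac{24}{n-28}H_2(f_2)$, and suitable $E_4,E_6$-combinations $g=E_4\phi-f_4$ and $\varphi_6$ are \emph{holomorphic} Jacobi forms of weight $4$ and $6$, with $\eta^6 g$ and $\eta^{12}g$ of weight $7$ and $10$ covering the remaining ranks. Since the singular weight of a holomorphic Jacobi form of index $nA_1(m)$ is $n/2$, each form vanishes once $n$ passes the corresponding threshold, and each such vanishing, read against the normal form above and the weight formula, forces linear relations on the $a_j$ and $k$ that become inconsistent once $n$ is moderately large; this bounds $n$. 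For the finitely many surviving small ranks the singular-weight bound is vacuous, so there I would instead solve directly the finite linear system cut out by the off-diagonal holomorphy conditions on the $q^0$- and low-$q$-coefficients of a weight-$0$ weak Jacobi form of index $nA_1(m)$ --- the higher-rank version of the computations of Gritsenko--Hulek \cite{GH} and Cl\'ery--Gritsenko \cite{CG1} --- and check that a solution with $k>0$ and $a_j\ge 0$ survives exactly when $nm\le 4$.

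The hard part will be this low-rank analysis together with the interplay of the two constraints. Unlike the $2$-reflective case, where a single multiplicity $\beta_0$ controls everything, here several multiples $je_i^\vee$ of each coordinate vector feed one diagonal component, so the coefficients $a_j$ and all auxiliary constant terms must be tracked through $H_0,H_2$ and the Eisenstein combinations with their exact dependence on $m$. In particular the boundary pairs with $nm=5$, namely $(1,5)$ and $(5,1)$, are \emph{not} excluded by the weight formula or by integrality and require the full off-diagonal holomorphy argument; pinning these down uniformly in $m$ is the crux.
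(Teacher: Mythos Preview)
Your framework is right, but you miss the one observation that collapses the whole argument: for the index lattice $L=nA_1(m)$, the vectors $\pm e_i/(2m)$ are the \emph{only} elements of $L^\vee$ of the minimal positive norm $1/(2m)$. The diagonal $\{z_n=0\}$ is cut out by the primitive vector $e_n$ with $\div(e_n)=2m$, so the $\Gamma_{n,m}$-orbit is exactly $\bigcup_i\cH(\pm e_i/(2m),-1/(4m))$, and because $1/(2m)$ is minimal these Heegner divisors are already primitive---there are no correction terms. Consequently the singular Fourier coefficients of the weight-$0$ Jacobi form $\phi$ have hyperbolic norm \emph{exactly} $-1/(2m)$; at $q^0$ this forces $(l,l)=1/(2m)$, hence $l=\pm e_i/(2m)$ only. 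In your notation $a_j=0$ for every $j\ge 2$, and your own weight formula degenerates to $k=a_1(6/m-n)=c(6-mn)/m$. Since $k>0$, this alone gives $nm\le 5$ in one line; the paper then disposes of the boundary $nm=5$ by a one-case check (for $(n,m)=(1,5)$ the unique weight-$0$ index-$5$ weak Jacobi form has a negative singular coefficient, while $(5,1)$ is killed by the singular-weight bound $k\ge n/2$).

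Your worry that ``the top term $a_m(6m-n)$ can keep $k$ positive for large $m$'' is therefore based on a false premise, and the whole apparatus of $H_0$, $H_2$, the $E_4/E_6$-combinations, $\eta$-twists, and the residual low-rank linear-system analysis is unnecessary here. That machinery is calibrated for the situations in \S\S3--4 where a genuine $q^{-1}$-term (the $\beta_0$ contribution) or several independent discriminant classes of the same norm are present; the dd-divisor has neither. So the gap is not a wrong step but a missed simplification that makes your plan an order of magnitude longer than it needs to be and leaves its endgame vague.
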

\begin{proof}
Suppose that $F_c$ is a modular form of weight $k$ with respect to $\Gamma_{n,m}$ with the divisor  $c\cdot\Gamma_{n,m}\{z_n=0\}$, where $c$ is the multiplicity of the diagonal divisor and it is a positive integer. The diagonal divisor $\Gamma_{n,m}\{z_n=0\}$ is the union of the primitive Heegner divisors $\cP(\pm e_i/(2m),-1/(4m))$, $1\leq i \leq n$, where the primitive Heegner divisor of discriminant $(\mu, d)$ is defined as
$$ \cP(\mu, d)= \bigcup_{\substack{ M+\mu \ni l\, \textit{primitive}  \\ (l,l)=2d}} l^{\perp}\cap \cD(M).$$ 
It is clear that we have
$$ \cP\left(\mu,y\right) = \cH \left(\mu,y\right) -  \sum_{d > y} x_d \cH (\lambda_d,d),  $$
where $x_d$ are integers and $\lambda_d\in M^\vee$ (we refer to \cite[Lemma 4.2]{BrM} for an explicit formula).
For arbitrary Heegner divisor $\cH(\lambda,d)$ with $\lambda=(0,n_1,\lambda_0,n_2,0) \in A_M$, the principal part of the corresponding nearly holomorphic modular form of weight $-\rank(L)/2$ with respect to the Weil representation $\rho_M$ of $\Mp_2(\ZZ)$ is $q^d\textbf{e}_\lambda$. Hence the singular Fourier coefficients of the corresponding weakly holomorphic Jacobi form of weight $0$ are represented as
$$ \sum_{r\in L+\lambda_0}q^{(r,r)/2+d}e^{2\pi i(r,\mathfrak{z})}. $$
Since $\{\pm e_i/(2m):1\leq i\leq n\}$ is the set of vectors in $nA_1(m)^\vee$ with the minimum norm $1/(2m)$ in $nA_1(m)^\vee/nA_1(m)$, through the previous explanations, there exists a weak Jacobi form $f_{nA_1,m}$ of weight $0$ and index $nA_1(m)$ satisfying
$$ f_{nA_1,m}=c \cdot \sum_{1\leq i \leq n}\zeta_i^{\pm 1}+2k +O(q)$$
such that $F_c$ is the Borcherds product of $f_{nA_1,m}$.
We next apply Lemma \ref{L2} to our case. In the case, $a=0$, $\rank(L)=n$. For each term $c\zeta_i$ or $c\zeta_i^{-1}$, the corresponding $l=\pm\frac{1}{2m}e_i$, $c(0,l)(l,l)=c\cdot 2m \cdot \frac{1}{4m^2}=\frac{c}{2m}$. Therefore, we get 
\begin{equation*}
m(2nc+2k)=12c,
\end{equation*}
then $nm\leq 5$. It is not hard to show that a weak Jacobi form for $nA_1(m)$ has integral Fourier coefficients if its $q^0$-term is integral when $nm\leq 5$.

When $m\leq 4$, $2k$ is integral if $c=1$. Hence the existence of $F_c$ is equivalent to the existence of $F_1$.
In view of $k\geq n/2$, then the triplet $(m,n,k)$ can only take one of the eight values
$$ (1,1,5), (1,2,4), (1,3,3), (1,4,2), (2,1,2), (2,2,1), (3,1,1), (4,1,\frac{1}{2}). $$

When $m=5$, we only need to consider the case of $c=5$, and we obtain the unique solution $(5,1,1)$. But the unique weak Jacobi form of weight $0$ and index $5$ for $A_1$ is $\psi_{0,5}^{(1)}=5\zeta^{\pm 1}+2+q(-\zeta^5+\cdots)$ (see \cite[formula (1.12)]{G1}). The corresponding Borcherds product is not holomorphic, that is, $F_c$ does not exist in the case. We have thus proved the theorem. 
\end{proof}

\begin{remark}
Similarly, we can define dd-modular forms with respect to the lattices $A_n(m)$ or $D_n(m)$. Using the same methodology, we can easily classify these dd-modular forms. In fact, dd-modular forms with respect to the lattices $L(m)$, where $L=A_n, n\geq 2$ or $L=D_n, n\geq 4$, exist if and only if the pair $(L,m)$ takes one of the following fifteen values
\begin{align*}
&(A_2,1)& &(A_3,1)& &(A_4,1)& &(A_5,1)& &(A_6,1)& &(A_7,1)& &(A_2,2)& &(A_2,3)&\\ &(A_3,2)& &(D_4,1)& &(D_5,1)& &(D_6,1)& &(D_7,1)& &(D_8,1)& &(D_4,2).& 
\end{align*}
Note that all dd-modular forms in Theorem \ref{th4} and in the above list do exist and can be found in \cite{CG1, G4, GN1v1,GN3}.
\end{remark}

\bigskip

\noindent
\textbf{Acknowledgements} The author would like to thank his supervisor Valery Gritsenko for helpful discussions and constant encouragement. The author thanks Shouhei Ma for explaining the proof of Lemma \ref{lemr} to him.  The author also thanks the anonymous referees for their careful reading and useful suggestions. This work was supported by the Labex CEMPI (ANR-11-LABX-0007-01) in the University of Lille.

\bibliographystyle{amsplain}

\end{document}